\DeclareOldFontCommand{\rm}{\normalfont\rmfamily}{\mathrm}
\DeclareOldFontCommand{\bf}{\normalfont\bfseries}{\mathbf}
\DeclareOldFontCommand{\it}{\normalfont\itshape}{\mathit}
\newlist{conditions}{enumerate}{1}
\setlist[conditions]{label=(\alph*),ref=(\alph*)}
\newcommand{\mylabel}[2]{#2\def\@currentlabel{#2}\label{#1}}
\newtheorem*{thm}{Theorem}
\newtheorem{lem}{Lemma}[section]
\newtheorem{cor}[lem]{Corollary}
\newtheorem{ass}[lem]{Assumption}
\newtheorem{defn}[lem]{Definition}
\newtheorem{remark}[lem]{Remark}
\let\expandafter\oldproof\csname\string\proof\endcsname
\let\oldendproof\endproof
\renewenvironment{proof}[1][\proofname]{%
  \oldproof[{#1}]%
}{\oldendproof}
\newcommand{\Lw}{\mathcal{L}^w}
\newcommand{\del}{\partial}
\newcommand{\supp}{\text{supp}}
\newcommand{\N}{\mathbb{N}}
\newcommand{\R}{\mathbb{R}}
\newcommand{\Z}{\mathbb{Z}}
\newcommand{\E}{\mathbb{E}}
\renewcommand{\vec}{\boldsymbol}
\newcommand{\eps}{\epsilon}
\renewcommand{\Pr}{\mathbb{P}}
\newcommand{\Pas}{$\Pr$-a.s.\ }
\newcommand{\edges}{\mathfrak{E}}
\newcommand{\D}{\mathscr{D}}
\newcommand{\Dn}{\D^{(n)}}
\newcommand{\I}{\mathscr{I}}
\newcommand{\In}{\I^{(n)}}
\newcommand{\Bln}{\mathscr{B}^{(n)}_l}
\let\originalleft\left
\let\originalright\right
\renewcommand{\left}{\mathopen{}\mathclose\bgroup\originalleft}
\renewcommand{\right}{\aftergroup\egroup\originalright}
\title{\large Eigenvector localization in the heavy-tailed random conductance model}
  \date{}
  \author{\small Franziska Flegel\\\small Weierstrass Institute Berlin}
\begin{document}
\numberwithin{equation}{section}
\numberwithin{figure}{section}

\rohead{Eigenvector localization in the RCM}
\rehead{F.\ Flegel}
\lohead{F.\ Flegel}
\lehead{Eigenvector localization in the RCM}
\setheadsepline{.5pt}

\maketitle

\vspace{-1cm}
\begin{abstract}
  We generalize our former localization result about the principal Dirichlet eigenvector of the i.i.d.\ heavy-tailed random conductance Laplacian to the first $k$ eigenvectors.
  We overcome the complication that the higher eigenvectors have fluctuating signs by invoking the Bauer-Fike theorem to show that the $k$th eigenvector is close to the principal eigenvector of an auxiliary spectral problem.
\end{abstract}

\section{Introduction}
Let us consider the random conductance Laplacian $\Lw$ acting on real-valued functions $f\in\nobreak\ell^2(\Z^d)$ as
\begin{align}
  (\Lw f)(x) =\sum_{y\colon |x-y|_1 =1} w_{xy}(f(y) - f(x)) \qquad (x\in\Z^d)
  \label{equ:DefGenerator}
\end{align}
with positive independent and identically distributed random conductances $w_{xy}$.
As usual, we further assume that the operator $\Lw$ is self-adjoint, i.e.\ $w_{xy}=w_{yx}$.
Our goal is to describe the almost-sure behavior of the solution to the spectral problem
\begin{align}
\begin{aligned}
  -\Lw \psi &= \lambda \psi &&\text{on }B_n=[-n,n]^2 \cap \Z^d\, ,\\
  \psi &= 0 &&\text{else.}
  \label{equ:SpectralProblem}
\end{aligned}
\end{align}
as the box size $n$ tends to infinity.
This means that we are interested in the Dirichlet eigenfunctions and eigenvalues of the operator $-\Lw$ in the box $B_n$ with zero Dirichlet conditions.

In the recent paper \cite{Flegel2016}, we have shown that if $\gamma := \sup \{ q\geq 0\colon \E [w^{-q}] <\infty \} < 1/4$ and certain regularity assumptions apply, then the principal Dirichlet eigenvector $\psi_1^{(n)}$ of Problem \eqref{equ:SpectralProblem} concentrates in a single site as $n$ tends to infinity.
To be more precise, let $\pi_z = \sum_{x\colon x\sim z} w_{xz}$ be the local speed measure, i.e., the inverse mean waiting time of the random walk generated by $\Lw$.
Then the principal Dirichlet eigenvector $\psi_1^{(n)}$ approaches the $\delta$-function in the site $z_{(1,n)}$ that minimizes the local speed measure $\pi$ over the box $B_n$.
Furthermore, the principal Dirichlet eigenvalue $\lambda_1^{(n)}$ is asymptotically equivalent to the minimum $\pi_{1,B_n}=\min_{z\in B_n} \pi_z$.

If, on the other hand, $\gamma > 1/4$, then the authors of \cite{FHS2017} have proved that the top of the Dirichlet spectrum of $\Lw$ homogenizes.
The spectrum of the random conductance Laplacian thus displays a dichotomy between a localized and a homgenized phase.

In the present paper we generalize our findings for $\gamma<1/4$ to the first $k$ Dirichlet eigenvectors and eigenvalues.
More precisely, we show that the $k$th Dirichlet eigenvector $\psi_k^{(n)}$ concentrates in the site that attains the $k$th minimum of $\pi$.
Consequently, the $k$th Dirichlet eigenvalue $\lambda_k^{(n)}$ is asymptotically equivalent to the $k$th minimum of $\pi$.
If the conductances vary regularly at zero with positive index, then despite the dependence structure of the random field $\left\{\pi_x\right\}_{x\in\Z^d}$, this $k$th minimum converges weakly as if $\left\{\pi_x\right\}_{x\in\Z^d}$ was an independent field, see the proof of Corollary \ref{cor:ConvInLaw}.
It follows that, in this case, the properly rescaled $k$th eigenvalue $\lambda_k^{(n)}$ converges in distribution to a non-degenerate random variable.
This relates to a similar result in dimension $d=1$, see \cite[Theorem 2.5(i)]{Faggionato2012}.

Note that the only reason why we have not generalized our findings to the first $k$ eigenvectors in \cite{Flegel2016}, is that in \cite[Lemma 5.6]{Flegel2016} we rely on the property that the principal Dirichlet eigenvector does not change its sign, according to the Perron-Frobenius theorem.
This is no longer true for the higher order eigenvectors.
To overcome this difficulty, we now approximate the first $k$ eigenvectors to \eqref{equ:SpectralProblem} by auxiliary principal eigenvectors using the Bauer-Fike theorem, see Lemma \ref{lem:BauerFike}.

Our results for the random conductance Laplacian compare well to similar results of the random Schr\"odinger operator $\Delta + \xi$ with random potential $\xi\colon \Z^d \to \R$, see \cite{Biskup2016} and \cite[Ch.\ 6]{Astrauskas2016}.
To keep the present paper as short as possible, we refer the reader to our first article \cite{Flegel2016} for more heuristics and references.
However, we kept the present paper mostly self-contained.

\subsection*{Model and main objects}
\label{subsec:model}
We consider the lattice with vertex set $\mathbb{Z}^d$ ($d\geq 2$) and edge set $\edges_d = \{ \{x,y\}: x,y \in \mathbb{Z}^d, |x-y|_1 = 1\}$.
If two sites $x,y\in \mathbb{Z}^d$ are neighbors according to $\edges_d$, we also write $x\sim y$.
To each edge $e\in \edges_d$ we assign a positive random variable $w_e$.
In analogy to a $d$-dimensional resistor network, we call these random weights $w_e$ \emph{conductances}.
We take $(\Omega,\mathcal{F})=\bigl( (0,\infty)^{\edges_d}, \mathcal{B} \left( (0,\infty) \right)^{\otimes\edges_d} \bigr)$ as the underlying measurable space and assume that an environment $\boldsymbol{w} = (w_e)_{ e\in\edges_d}\in \Omega$ is a family of i.i.d.\ positive random variables with law $\Pr$. We denote the expectation with respect to $\Pr$ by $\E$.

If $e$ is the edge between the sites $x,y\in \mathbb{Z}^d$, we also write $w_{xy}$ or $w_{x,y}$ instead of $w_e$.
Note that by definition of the edge set $\edges_d$, the edges are undirected, whence $w_{xy} = w_{yx}$.
If we want to refer to an arbitrary copy of the conductances in general, we simply write $w$, i.e., for a set $A\in\mathcal{B} \left( (0,\infty) \right)$, the expression $\Pr [w\in A]$ equals $\Pr [w_{e}\in A]$ for an arbitrary edge $e$.

We call
\begin{align}
  F\colon [0,\infty) \to [0,1]\colon u\mapsto \Pr [w\leq u]
\end{align}
the distribution function of the conductances.

For an arbitrary $k\in\N$, our goal is to study the behavior of the first $k$ Dirichlet eigenvalues $\lambda_1^{(n)}\leq \ldots \leq \lambda_k^{(n)}$ and eigenvectors $\psi_1^{(n)}, \ldots, \psi_k^{(n)}$ of the sign-inverted generator $-\mathcal{L}_{\boldsymbol{w}}$ in the ball
\begin{align}
  B_n := \left\{ x\in\mathbb{Z}^d\colon |x|_\infty \leq n\right\} = [-n,n]^d \cap \Z^d
\end{align}
with zero Dirichlet conditions at the boundary.

For a subset $A\subset \Z^d$ we define the function space
\begin{align}
  \ell^2 (A) := \left\{ f\colon \Z^d \to \R \text{ such that } \supp\, f \subseteq A \text{ and } \sum_{x\in A} f(x)^2 < \infty \right\} \subset \ell^2 (\Z^d)\, ,
  \label{equ:Defell2A}
\end{align}
where we let ``$\supp\, f$'' denote the support of the function $f$.
Accordingly, for functions $f_1, f_2\in\ell^2 (\Z^d)$ we define the scalar product
\begin{align*}
  \langle f_1, f_2 \rangle_{\ell^2 (A)} = \sum_{x\in A} f_1 (x) f_2 (x)\, .
\end{align*}

For a real-valued function $f\in \ell^2 (\Z^d)$ let us define the Dirichlet energy $\mathcal{E}^{\boldsymbol{w}} (f)$ with respect to the operator $-\mathcal{L}_{\boldsymbol{w}}$ by
\begin{align}
  \label{equ:DefDirichletEnergy}
  \mathcal{E}^{\boldsymbol{w}} (f) = \langle f, -\mathcal{L}_{\boldsymbol{w}} f \rangle_{\ell^2 (\Z^d)}\, .
\end{align}
Then, according to the Courant-Fischer theorem, the $k$th Dirichlet eigenvalue is given by the variational formula
\begin{align}
	\lambda_k^{(n)} = \inf_{\mathcal{M} \leq \ell^2 (B_n),\atop \text{dim}\, \mathcal{M} =k} \sup_{f\in \mathcal{M},\atop \| f \|_{2} = 1} \mathcal{E}^{\boldsymbol{w}} (f)
		\label{equ:Variational}
\end{align}
where $\mathcal{M} \leq \ell^2 (B_n)$ means that $\mathcal M$ is a linear subspace of $\ell^2 (B_n)$.
Note that $\lambda_k^{(n)} = \mathcal{E}^{\boldsymbol{w}} \bigl(\psi_k^{(n)}\bigr)$.

\begin{defn}[Local speed measure and its order statistics]
\label{def:Pi}
  We define the local speed measure $\pi$ by
  \begin{align}
    \pi_z = \sum_{x\colon x\sim z} w_{xz} \qquad(z\in\Z^d)
  \end{align}
  and we label the order statistics of the set $\left\{ \pi_z \right\}_{z\in B_n}$ by
  \begin{align}
    \pi_{1,B_n} \leq \pi_{2,B_n} \leq \ldots \leq \pi_{|B_n|,B_n}\, .
  \end{align}
  Furthermore, for $k,n\in\N$ let $z_{(k,n)}$ be the site where $\pi$ attains its $k$th minimum over $B_n$, i.e., $\pi_{z_{(k,n)}} = \pi_{k,B_n}$.
\end{defn}
\begin{remark}
\label{rem:ContF}
  If $F$ is continuous, then $\pi_{1,B_n} < \pi_{2,B_n} < \ldots < \pi_{|B_n|,B_n}$ \Pas and therefore the minimizers $z_{(k,n)}$ are \Pas unique.
\end{remark}

\section{Main result}
In what follows we let
  \begin{align}
    \label{def:ThmFuncG}
    g:[0,\infty)\to [0,\infty)\colon u\mapsto \sup\,\left\{ s\geq 0 \colon F(s) = u^{-1/2}\right\}\, .%\, , \text{ and}
  \end{align}

\begin{ass}
  \label{ass:main}
  Let $F$ be continuous and vary regularly at zero with index $\gamma\in [0,1/4)$.
  Assume that there exists $a^\ast>0$ such that $F(ab) \geq bF(a)$ for all $a\leq a^\ast$ and all $0\leq b\leq1$.
  In the case where $\gamma=0$, we assume additionally that there exists $\eps_1\in (0,1)$ such that the product $n^{2+\eps_1} g \left( n \right)$ converges monotonically to zero as $n$ grows to infinity.
\end{ass}

\begin{remark}
\label{rem:RegVarEps1}
  In the case where $\gamma>0$, it follows that $(1/F(1/s))^2$ varies regularly at infinity with index $2\gamma$.
Further, $(1/F(1/s))^2$ diverges as $s\to\infty$.
It follows by virtue of \cite[Prop. 0.8(v)]{Resnick1987} that $1/g(u) = \inf\,\left\{ s\geq 0\colon (1/F(1/s))^2 = u \right\}$ varies regularly at infinity with index $1/(2\gamma)$ and thus $g$ varies regularly at infinity with index $-1/(2\gamma)$.
Since in addition $\gamma<1/4$, there exists $\eps_1\in (0,1)$ such that $-1/(2\gamma) < -(2+\eps_1)$.
\end{remark}

\begin{thm}
  \label{MainThm}
  Let $k\in \N$.
  If Assumption \ref{ass:main} holds, then the $k$th Dirichlet eigenvalue $\lambda_k^{(n)}$ with zero Dirichlet conditions outside the box $B_n$ fulfills
  \begin{align}
    \Pr \left[ \lim_{n\to\infty} \frac{\lambda_k^{(n)}}{\pi_{k,B_n}} = 1 \right] = 1
    \label{equ:thm:EigValQual}
  \end{align}
  and the mass of the $k$th Dirichlet eigenvector $\psi_k^{(n)}$ asymptotically concentrates in the site $z_{(k,n)}$.
  More precisely, if $\eps_1>0$ is as in Assumption \ref{ass:main} or Remark \ref{rem:RegVarEps1}, then \Pas for $n$ large enough
  \begin{align}
      1 - n^{-\eps/8}\leq \frac{\lambda_k^{(n)}}{\pi_{k,B_n}} \leq 1\qquad\text{for all } \eps<\eps_1
      \label{equ:thm:EigVal}
  \end{align}
  and
  \begin{align}
    \psi_k^{(n)} \left(z_{(k,n)}\right) \geq \sqrt{1 - n^{-\eps/4}}\qquad\text{for all } \eps<\eps_1\, .
    \label{equ:thm:EigVec}
  \end{align} 
\end{thm}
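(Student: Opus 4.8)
The plan is to bootstrap from the $k=1$ case of \cite{Flegel2016} by induction, and the key new ingredient is the auxiliary spectral problem together with the Bauer--Fike theorem (Lemma \ref{lem:BauerFike}). The overall strategy has two halves: an \emph{upper bound} on $\lambda_k^{(n)}$, which is the easy direction, and a \emph{lower bound} together with the localization statement, which is the hard direction.

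For the upper bound I would use the variational characterization \eqref{equ:Variational}: test with the $k$-dimensional subspace $\mathcal M = \Span\{\delta_{z_{(1,n)}}, \ldots, \delta_{z_{(k,n)}}\}$ spanned by the indicator functions of the $k$ smallest-$\pi$ sites. Since these sites are typically far apart (by the regular variation at zero, distinct near-minima of $\pi$ are at mutual distance at least $n^{\text{const}}$ with high probability), for $f = \sum_{i} c_i \delta_{z_{(i,n)}}$ with $\|f\|_2=1$ we get $\mathcal E^{\boldsymbol w}(f) = \sum_i c_i^2 \pi_{z_{(i,n)}} + (\text{cross terms})$, and the cross terms vanish when the sites are non-adjacent. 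Hence $\sup_{f\in\mathcal M,\|f\|=1}\mathcal E^{\boldsymbol w}(f) \le \pi_{k,B_n}$, giving the right-hand inequality in \eqref{equ:thm:EigVal} for free; this also explains why the ratio is bounded by $1$.

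The lower bound and localization are where the real work lies, and here I would proceed inductively in $k$. Assume $\psi_1^{(n)},\dots,\psi_{k-1}^{(n)}$ are already localized near $z_{(1,n)},\dots,z_{(k-1,n)}$ with the claimed rates. The idea is to compare $-\Lw$ restricted to $B_n$ with an auxiliary operator $H_n$ obtained by, e.g., deleting the sites $z_{(1,n)},\dots,z_{(k-1,n)}$ (or adding a large potential there), so that the principal eigenvalue of $H_n$ is $\pi_{k,B_n}(1+o(1))$ and its principal eigenvector is nonnegative by Perron--Frobenius and localizes at $z_{(k,n)}$ exactly by the $k=1$ argument of \cite{Flegel2016} applied on the punctured box. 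Since $\psi_k^{(n)}$ is orthogonal to $\psi_1^{(n)},\dots,\psi_{k-1}^{(n)}$, which are almost supported on the deleted sites, $\psi_k^{(n)}$ is an approximate eigenvector of $H_n$ with approximate eigenvalue $\lambda_k^{(n)}$; the Bauer--Fike theorem then forces $\lambda_k^{(n)}$ to be within the perturbation error of the spectrum of $H_n$, and a spectral-gap estimate (the gap between the first and second eigenvalues of $H_n$ is itself of order $\pi_{k,B_n}$ up to logarithmic factors, as in \cite{Flegel2016}) pins $\lambda_k^{(n)}$ to the \emph{principal} eigenvalue of $H_n$, yielding \eqref{equ:thm:EigValQual} and the quantitative bound \eqref{equ:thm:EigVal}. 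Feeding $\lambda_k^{(n)} = \pi_{k,B_n}(1+o(1))$ back into the Rayleigh quotient $\mathcal E^{\boldsymbol w}(\psi_k^{(n)}) = \lambda_k^{(n)}$ and expanding $\mathcal E^{\boldsymbol w}$ as a sum of positive edge terms plus the diagonal $\sum_x \pi_x \psi_k^{(n)}(x)^2$ shows that the mass of $\psi_k^{(n)}$ must sit on sites with $\pi_x$ close to $\pi_{k,B_n}$; combined with the orthogonality to the lower eigenvectors this isolates $z_{(k,n)}$ and gives \eqref{equ:thm:EigVec} after tracking constants.

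The main obstacle is controlling the perturbation error in the Bauer--Fike step: one must show simultaneously that (i) $\psi_k^{(n)}$ genuinely has negligible mass on $z_{(1,n)},\dots,z_{(k-1,n)}$ — which needs the induction hypothesis \emph{plus} an estimate that $\psi_k^{(n)}$ cannot concentrate where the lower eigenvectors already do, i.e. a quantitative near-orthogonality in $\ell^\infty$, not just $\ell^2$ — and (ii) that the spectral gaps of the auxiliary problem $H_n$ separate cleanly at the relevant scale $n^{-\eps/8}$, which is exactly the regime where the regular-variation hypothesis on $F$ at zero and the monotone-decay condition $n^{2+\eps_1}g(n)\to 0$ enter to guarantee that the $k$th and $(k+1)$th minima of $\pi$ over $B_n$ are comparable while the eigenvalue contributions of non-extremal sites are $o(\pi_{k,B_n})$. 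Making these two estimates quantitatively consistent — so that the Bauer--Fike error is smaller than the gap — is the crux, and it is precisely the point where \cite[Lemma 5.6]{Flegel2016} used positivity of the principal eigenvector and where the new auxiliary-problem construction is needed as a replacement.
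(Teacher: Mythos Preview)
Your overall architecture matches the paper's almost exactly: the upper bound via the span of $\delta$-functions, the auxiliary operator obtained by deleting $z_{(1,n)},\dots,z_{(k-1,n)}$, the Perron--Frobenius argument for its principal eigenvector, the $\ell^\infty$ near-orthogonality (the paper's Lemma~\ref{lem:orth}), and the Bauer--Fike comparison for eigenvalues are all precisely what the paper does. The induction in $k$ and the spectral-gap check for $H_n$ are also right in spirit.

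The one genuine gap is your final step for the eigenvector. You write that one can expand $\mathcal{E}^{\boldsymbol w}(\psi_k^{(n)})$ as ``a sum of positive edge terms plus the diagonal $\sum_x \pi_x \psi_k^{(n)}(x)^2$'' and conclude that the mass must sit where $\pi_x\approx\pi_{k,B_n}$. But there is no such decomposition: one has
\[
\mathcal{E}^{\boldsymbol w}(f)=\sum_x \pi_x f(x)^2 - \sum_{x\sim y} w_{xy} f(x)f(y),
\]
and the off-diagonal term has no sign for a sign-changing $f$. A function can carry substantial mass on high-$\pi$ sites while keeping $\mathcal{E}^{\boldsymbol w}(f)$ small by oscillating so that neighboring values nearly agree. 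This is \emph{exactly} the obstruction the paper is written to overcome: the $k=1$ argument of \cite{Flegel2016} (your ``mass must sit where $\pi$ is small'') uses nonnegativity of the eigenvector in an essential way (cf.\ Lemma~\ref{lem:uniquemax}), and fails for $\psi_k^{(n)}$ with $k\geq 2$.

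The paper's fix is to use not just the eigenvalue part of Bauer--Fike but also its \emph{eigenvector} part (the $\overline u$ in Lemma~\ref{lem:BauerFike}). Having shown that $\psi_k^{(n)}$ is an approximate eigenvector of $-\Lw_{(k,n)}$ with approximate eigenvalue $\lambda_k^{(n)}$, and that the only eigenvalue of $-\Lw_{(k,n)}$ in a window of width $\asymp n^{-\delta}\pi_{k,B_n}$ around $\lambda_k^{(n)}$ is the simple principal one $\mu_{k,1}^{(n)}$ (this uses Lemma~\ref{lem:InductiveStep} with $l=k$, $m=1$ and Lemma~\ref{lem:QuotOrder}), Bauer--Fike gives directly $\|\psi_k^{(n)}-\phi_{k,1}^{(n)}\|_2 = o(1)$. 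Localization of $\psi_k^{(n)}$ at $z_{(k,n)}$ then follows from the already-proved localization of the \emph{nonnegative} vector $\phi_{k,1}^{(n)}$ (Lemma~\ref{lem:MuPhi1}), with no Rayleigh-quotient bookkeeping needed. So your plan is right up to the last move; replace the Rayleigh-quotient step by the eigenvector conclusion of Lemma~\ref{lem:BauerFike} and you recover the paper's proof.
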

We prove this theorem in Section \ref{sec:ProofMainThm}.

Similar to \cite[Corollary 1.11]{Flegel2016}, we can now infer the weak convergence of the eigenvalues.
Let $F_\pi$ be the distribution function of the random variable $\pi$, i.e., the distribution function of the sum of $2d$ independent copies of the conductance $w$.
Note that since $F$ is continuous, $F_\pi$ is continuous as well.
As in \cite[(1.18)]{Flegel2016}, we define
\begin{align}
 \label{equ:Defh}
 h\colon (0,\infty) \to (0,\infty)\colon u \mapsto \inf\,\left\{ s\colon \frac{1}{F_\pi (1/s)} = u \right\}\, .
\end{align}
Let $F$ vary regularly at zero with index $\gamma>0$.
Then by virtue of \cite[Lemma 5.8]{Flegel2016}, it follows that $F_\pi$ varies regularly at zero with index $2d\gamma$.
It thus follows by virtue of \cite[Proposition 0.8(v)]{Resnick1987} that $h$ varies regularly at infinity with index $1/(2d\gamma)$.
Therefore there exists a function $L^\ast$ that varies slowly at infinity such that
\begin{align}
\label{def:Last}
 h(|B_n|) = n^{\frac{1}{2\gamma}} L^\ast (n)\, .
\end{align}

\begin{cor}
\label{cor:ConvInLaw}
   Assume that $F$ fulfills Assumption \ref{ass:main} with $\gamma>0$ and let $L^\ast$ be as in \eqref{def:Last}.
   Let $k\in\N$. Then as $n$ tends to infinity, the product $L^\ast (n) n^{\frac{1}{2\gamma}} \lambda_k^{(n)}$ converges in distribution to a non-degenerate random variable.
   More precisely,
   \begin{align}
      \lim_{n\to \infty}\Pr \left[ L^\ast (n) n^{\frac{1}{2\gamma}} \lambda_k^{(n)} > \zeta\right] = \exp\left( -\zeta^{2d\gamma} \right)\sum_{j=0}^{k-1}\frac{\zeta^{2d\gamma j}}{j!}\qquad\text{for all } \zeta\in [0,\infty)\, .
   \end{align}
\end{cor}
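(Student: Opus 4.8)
The plan is to deduce the statement from Theorem \ref{MainThm} together with classical extreme-value asymptotics for the low order statistics of the stationary field $\{\pi_z\}_{z\in\Z^d}$, much as in the proof of \cite[Corollary 1.11]{Flegel2016}. Write $b_n := h(|B_n|)^{-1} = \bigl(n^{1/(2\gamma)}L^\ast(n)\bigr)^{-1}$, so that $L^\ast(n)\,n^{1/(2\gamma)}\lambda_k^{(n)} = \bigl(\lambda_k^{(n)}/\pi_{k,B_n}\bigr)\cdot\bigl(\pi_{k,B_n}/b_n\bigr)$. By \eqref{equ:thm:EigValQual} the first factor converges to $1$ \Pas, so by Slutsky's theorem it suffices to prove that $\pi_{k,B_n}/b_n$ converges in distribution to a random variable $\tau_k$ with $\Pr[\tau_k>\zeta]=\exp(-\zeta^{2d\gamma})\sum_{j=0}^{k-1}\zeta^{2d\gamma j}/j!$; since this limiting distribution function is continuous, convergence of the distribution functions then holds at every $\zeta\in[0,\infty)$ (trivially so at $\zeta=0$, where both sides equal $1$ because $\lambda_k^{(n)}>0$), which is exactly the assertion.

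Next I translate the order statistic into a counting variable. Since $\pi_{k,B_n}$ is the $k$th smallest element of $\{\pi_z\}_{z\in B_n}$, for every $\zeta>0$ we have $\Pr[\pi_{k,B_n}/b_n>\zeta]=\Pr[N_n\le k-1]$, where $N_n:=\#\{z\in B_n:\pi_z\le\zeta b_n\}$, so the task reduces to showing that $N_n$ converges in law to a Poisson variable with parameter $\zeta^{2d\gamma}$. For the parameter, note that $F_\pi$ is continuous and strictly increasing on $(0,\infty)$, so that \eqref{equ:Defh} yields $F_\pi(b_n)=1/|B_n|$ exactly; since $F_\pi$ varies regularly at zero with index $2d\gamma$ by \cite[Lemma 5.8]{Flegel2016},
\[ \E N_n = |B_n|\,F_\pi(\zeta b_n) = \frac{F_\pi(\zeta b_n)}{F_\pi(b_n)} \to \zeta^{2d\gamma}\qquad\text{as }n\to\infty. \]

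To upgrade convergence of the mean to convergence in law I will apply the Chen--Stein method to $N_n=\sum_{z\in B_n}\mathbf{1}\{\pi_z\le\zeta b_n\}$, taking as dependence neighbourhood of a site $z$ the ball $\{z':|z-z'|_1\le 1\}$; this is legitimate because $\pi_z$ and $\pi_{z'}$ are independent whenever the edge sets at $z$ and $z'$ are disjoint, i.e.\ whenever $|z-z'|_1\ge 2$. The Chen--Stein bound $d_{\mathrm{TV}}\bigl(N_n,\mathrm{Poisson}(\E N_n)\bigr)\le b_1+b_2$ then has $b_1\lesssim|B_n|\,F_\pi(\zeta b_n)^2\asymp|B_n|^{-1}\to 0$; and since $\pi_z\le\zeta b_n$ forces each of the $2d$ conductances at $z$ to be at most $\zeta b_n$, neighbouring sites $z\sim z'$ satisfy $\Pr[\pi_z\le\zeta b_n,\,\pi_{z'}\le\zeta b_n]\le F(\zeta b_n)^{4d-1}$. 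Combining this with the elementary two-sided bound $c\,F(u)^{2d}\le F_\pi(u)\le F(u)^{2d}$ for small $u$ (the upper inequality is trivial, the lower follows from $F_\pi(u)\ge F(u/(2d))^{2d}$ and regular variation of $F$) gives $F(\zeta b_n)\asymp|B_n|^{-1/(2d)}$, hence $b_2\lesssim|B_n|\,F(\zeta b_n)^{4d-1}\asymp|B_n|^{-1+1/(2d)}\to 0$. Since moreover $b_3=0$ (exact independence outside the neighbourhood), we conclude $N_n\Rightarrow\mathrm{Poisson}(\zeta^{2d\gamma})$, and combining this with the reduction in the first paragraph proves the corollary.

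The one point deserving genuine care is this last step: the fact that the correlations between neighbouring speed measures are asymptotically invisible to the low order statistics of $\{\pi_z\}$, i.e.\ the bound on $b_2$. This is where regular variation enters decisively — an extra edge constraint multiplies the relevant probability by a true power $F(\zeta b_n)\asymp|B_n|^{-1/(2d)}$, which more than compensates the at most $|B_n|$ choices of a neighbouring cluster. A purely elementary factorial-moment computation, showing $\E\bigl[N_n(N_n-1)\cdots(N_n-r+1)\bigr]\to(\zeta^{2d\gamma})^r$ for each fixed $r$, is an alternative to invoking Chen--Stein, but it requires a somewhat more careful accounting of the connected clusters inside an $r$-tuple of sites, for which the two-sided bound on $F_\pi$ and a lower bound on the number of edges incident to a connected vertex set are again the key inputs.
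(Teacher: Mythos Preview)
Your argument is correct and takes a genuinely different route from the paper. Both proofs start identically, reducing via the main theorem and Slutsky to the distributional asymptotics of $\pi_{k,B_n}/a_n$ (your $b_n$ is the paper's $a_n$). From there the paper proceeds by induction on $k$: the base case $k=1$ is taken from \cite[Corollary 1.11]{Flegel2016}, and the inductive step decomposes $\Pr[\pi_{k,B_n}>a_n\zeta]-\Pr[\pi_{k-1,B_n}>a_n\zeta]$ as a sum over $(k-1)$-subsets $A\subset B_n$, then uses Bonferroni inequalities and a combinatorial lemma (Lemma~\ref{lem:AuxConvInProb}) controlling contributions from subsets $M$ with $|CC(M)|<|M|$. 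You instead pass directly to the counting variable $N_n$ and invoke Chen--Stein, which handles all $k$ at once and packages the cluster combinatorics into the single estimate on $b_2$. The decisive input is the same in both proofs: a neighbouring pair $\{\pi_z\le a_n\zeta\}\cap\{\pi_{z'}\le a_n\zeta\}$ forces $4d-1$ independent conductances below $a_n\zeta$, which by regular variation contributes an extra factor $\asymp|B_n|^{-1/(2d)}$ that kills the $|B_n|$ choices of location. Your approach is shorter and conceptually cleaner; the paper's is more self-contained (no external Poisson-approximation theorem) but needs more bookkeeping. One cosmetic point: you assert that $F_\pi$ is strictly increasing, which is not obvious, but continuity alone already gives $F_\pi(b_n)=|B_n|^{-1}$ from the definition of $h$, so this is harmless.
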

This corollary extends \cite[Corollary 1.11]{Flegel2016} to general $k\in\N$.
We prove it at the end of Section \ref{sec:EigVals}.

\section{Auxiliary spectral problems}
\begin{defn}[Auxiliary lattice and Laplacian]
  We define the set
  \begin{align}
    \Bln = B_n \backslash \left\{ z_{(1,n)}, \ldots, z_{(l-1,n)} \right\}
  \end{align}
  and abbreviate the operator $\Lw$ with zero Dirichlet conditions outside $\Bln$ as $\Lw_{(l,n)}$, i.e., we define
  \begin{align}
    \Lw_{(l,n)} := \mathds{1}_{\Bln} \,\Lw\, \mathds{1}_{\Bln}\, ,
  \end{align}
  where the operator $\mathds{1}_{\Bln}$ is the identity on $\Bln$ and zero otherwise.
\end{defn}
Since the operator $-\Lw$ is self-adjoint, the operator $-\Lw_{(l,n)}$ is self-adjoint as well.
This justifies the next definition.
\begin{defn}[Auxiliary eigenvectors and values]
We define the eigenvalues of the operator $-\Lw_{(l,n)}$ restricted to $\ell^2\left( \Bln \right)$ by
  \begin{align}
    \mu_{l,1}^{(n)} \leq \mu_{l,2}^{(n)} \leq \ldots \leq \mu_{l,|\Bln|}^{(n)}
  \end{align}
  and its eigenvectors by
  \begin{align}
    \phi_{l,1}^{(n)}, \phi_{l,2}^{(n)}, \ldots, \phi_{l,|\Bln|}^{(n)} \in \ell^2\left( \Bln \right) \qquad \text{with}\quad \left\langle\phi_{l,i}^{(n)}, \phi_{l,j}^{(n)}\right\rangle = \delta_{ij}\, .
  \end{align}
\end{defn}
Note that $\mathscr{B}_1^{(n)} = B_n$ and thus $\mu_{1,k}^{(n)} = \lambda_k^{(n)}$ and $\phi_{1,k}^{(n)} = \psi_k^{(n)}$.
Moreover the variational formula for the auxiliary eigenvalues reads
\begin{align}
	\mu_{l,m}^{(n)} = \inf_{\mathcal{M} \leq \ell^2 (\Bln),\atop \text{dim}\, \mathcal{M} =m} \sup_{f\in \mathcal{M},\atop \| f \|_{2} = 1} \mathcal{E}^{\boldsymbol{w}} (f)\, .
		\label{equ:VariationalMu}
\end{align}

\begin{remark}[Perron-Frobenius]
 \label{rem:PF}
  For a given box $B_n$ the operator $\Lw_{(l,n)}$ can be written as a $(|B_n|-l+1) \times (|B_n|-l+1)$-matrix with non-negative entries everywhere except on the diagonal.
  Since the matrix is finite-dimensional, we can add a multiple of the identity to obtain a non-negative primitive matrix without changing the matrix' spectrum.
  By the Perron-Frobenius theorem (see e.g.\ \cite[Ch.\ 1]{SenetaNN}) it follows that its principal eigenvalue $-\mu_{l,1}^{(n)}$ is simple and we can assume without loss of generality that its principal eigenvector is positive, which implies that $\phi_{l,1}^{(n)}$ is nonnegative.
\end{remark}

\begin{lem}
  \label{lem:TrivialUpperBound}
  For any $l\in\N$ and $m\in\{1,\ldots,|B_n|-l+1\}$ the eigenvalue $\mu_{l,m}^{(n)}$ is bounded from above by
  \begin{align}
    \label{equ:lem1:EigVal:UpperBound}
    \mu_{l,m}^{(n)} \leq \pi_{l+m-1, B_n}\, .
  \end{align}
\end{lem}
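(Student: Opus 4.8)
The plan is to apply the Courant--Fischer formula \eqref{equ:VariationalMu}: to bound $\mu_{l,m}^{(n)}$ from above it suffices to exhibit \emph{one} subspace $\mathcal M\le \ell^2(\Bln)$ of dimension $m$ on which $\sup_{f\in\mathcal M,\,\|f\|_{2}=1}\mathcal E^{\boldsymbol w}(f)\le\pi_{l+m-1,B_n}$. First I would observe that, since $\Bln$ is obtained from $B_n$ by deleting only the $l-1$ sites $z_{(1,n)},\dots,z_{(l-1,n)}$ at which $\pi$ is smallest, the $m$ sites $z_{(l,n)},z_{(l+1,n)},\dots,z_{(l+m-1,n)}$ all lie in $\Bln$ (here I use $m\le|\Bln|=|B_n|-l+1$). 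I would then take $\mathcal M=\Span\{\delta_{z_{(l,n)}},\dots,\delta_{z_{(l+m-1,n)}}\}$: these indicator functions belong to $\ell^2(\Bln)$ and, being supported on distinct sites, are orthonormal, so $\dim\mathcal M=m$.

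The second step is to estimate the Dirichlet energy on $\mathcal M$. Writing a normalised element as $f=\sum_{i=l}^{l+m-1}c_i\,\delta_{z_{(i,n)}}$ with $\sum_i c_i^2=1$, substituting into the standard rewriting $\mathcal E^{\boldsymbol w}(f)=\sum_{\{x,y\}\in\edges_d}w_{xy}(f(x)-f(y))^2$ of the Dirichlet form, and splitting the edges according to how many of their endpoints lie among the $z_{(i,n)}$, a short bookkeeping gives
\begin{align*}
  \mathcal E^{\boldsymbol w}(f)=\sum_{i=l}^{l+m-1}c_i^2\,\pi_{z_{(i,n)}}-2\sum_{i<j\,:\,z_{(i,n)}\sim z_{(j,n)}}w_{z_{(i,n)}z_{(j,n)}}\,c_ic_j\,.
\end{align*}
Since $\pi_{z_{(l,n)}}\le\dots\le\pi_{z_{(l+m-1,n)}}=\pi_{l+m-1,B_n}$ and $\sum_i c_i^2=1$, the first sum is at most $\pi_{l+m-1,B_n}$; hence \eqref{equ:lem1:EigVal:UpperBound} follows provided the second (cross) sum is nonnegative, i.e.\ the whole expression does not exceed its diagonal part.

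The step I expect to require the most care is precisely this cross term. It is carried by the edges joining two of the minimizing sites, and for coefficient vectors of alternating sign it can be strictly positive (already when $m=2$ and $z_{(l,n)}\sim z_{(l+1,n)}$, one gets $\tfrac12(\pi_{z_{(l,n)}}+\pi_{z_{(l+1,n)}})+w_{z_{(l,n)}z_{(l+1,n)}}$). However, it vanishes identically as soon as $z_{(l,n)},\dots,z_{(l+m-1,n)}$ are pairwise non-adjacent, which is the only case that matters here: in the heavy-tailed regime of Assumption \ref{ass:main} the finitely many sites of $B_n$ realising the smallest values of $\pi$ are, $\Pr$-a.s.\ for all large $n$, separated by large lattice distances, so no edge of $\edges_d$ joins two of them. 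Granting this separation, $\sup_{f\in\mathcal M,\,\|f\|_{2}=1}\mathcal E^{\boldsymbol w}(f)=\max_{l\le i\le l+m-1}\pi_{z_{(i,n)}}=\pi_{l+m-1,B_n}$, and feeding this into \eqref{equ:VariationalMu} proves the lemma. As an alternative organisation one may first settle the case $l=1$, where $\mu_{1,m}^{(n)}=\lambda_m^{(n)}$, and then deduce the general case by Cauchy interlacing, since $-\Lw_{(l,n)}$ is a principal compression of $-\Lw_{(1,n)}=-\Lw|_{B_n}$, which yields $\mu_{l,m}^{(n)}\le\lambda_{l+m-1}^{(n)}$.
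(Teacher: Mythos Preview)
Your approach is exactly the paper's: its entire proof is the single sentence ``choose $\mathcal M=\Span\{\delta_{z_{(l,n)}},\dots,\delta_{z_{(l+m-1,n)}}\}$ and insert it into \eqref{equ:VariationalMu}''. You have gone further by actually computing $\mathcal E^{\boldsymbol w}$ on $\mathcal M$ and isolating the cross term $-2\sum w_{z_{(i,n)}z_{(j,n)}}c_ic_j$, and your concern is justified: with adjacent minimisers and sign-alternating coefficients the supremum over $\mathcal M$ can exceed $\pi_{l+m-1,B_n}$, so this test space does \emph{not} yield the bound in general. In fact the lemma as written (deterministically, for every $l,m,n$) is false: take $l=1$, $m=|B_n|$ and all conductances equal to~$1$; then $\pi_z\equiv 2d$ while the largest Dirichlet eigenvalue of $-\Lw$ on $B_n$ tends to $4d$ as $n\to\infty$. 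The paper glosses over this.

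Your resolution via pairwise non-adjacency of the first $l+m-1$ minimisers is the correct one and is precisely what the paper relies on implicitly (Remark~\ref{rem:TechnicalStuff}). Every downstream use of the lemma is either with $m=1$, where there is no cross term, or inside a ``\Pas for $n$ large enough'' argument with $l+m$ fixed, where non-adjacency is available. So you have proved what the paper actually needs. Your Cauchy-interlacing alternative cleanly reduces general $l$ to $l=1$ but does not sidestep the adjacency issue, since $\lambda_{l+m-1}^{(n)}\le\pi_{l+m-1,B_n}$ still requires the same separation.
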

\begin{proof}
  We choose
  \begin{align*}
    \mathcal{M} = \text{span}\, \left\{ \delta_{z_{(l,n)}}, \delta_{z_{(l+1,n)}}, \ldots, \delta_{z_{(l+m-1,n)}} \right\}
  \end{align*}
  and insert it as a test space into the variational formula \eqref{equ:VariationalMu}.
\end{proof}

\subsection{Principal eigenvectors}
The following lemma is the analogue of \cite[Lemma 5.6]{Flegel2016}, where we need the Perron-Frobenius property.
\begin{lem}
  \label{lem:uniquemax}
   Let $k\in\N$ and let $y,z\in B_n\cap\mathscr{B}_k^{(n)}$ with $\pi_z < \pi_y$ and $y\nsim z$.
   Assume that $\phi_{k,1}^{(n)}$ is nonnegative.
   Further, define $m_y = 2 \max_{x\colon x\sim y} \phi_{k,1}^{(n)}(x)$.
   Then the mass $\phi_{k,1}^{(n)} (y)$ is bounded from above by
   \begin{align}
	\phi_{k,1}^{(n)}(y) \leq \frac{m_y}{1 - \frac{\pi_z}{\pi_y}}\, .
	\end{align}
\end{lem}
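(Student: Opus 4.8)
The statement bounds the value of the nonnegative principal eigenvector $\phi_{k,1}^{(n)}$ at a site $y$ by comparing the Dirichlet energy it would carry if it had too much mass there against the known upper bound for $\mu_{k,1}^{(n)}$. The plan is to use the eigenvalue equation at the site $y$ together with the variational characterization of $\mu_{k,1}^{(n)}$, and in particular the upper bound $\mu_{k,1}^{(n)}\leq\pi_{k,B_n}\leq\pi_z$ coming from Lemma \ref{lem:TrivialUpperBound} (valid since $z\in\mathscr{B}_k^{(n)}$ means $\pi_z\geq\pi_{k,B_n}$; if not we would instead just use $\mu_{k,1}^{(n)}\le\pi_z$ directly as $\delta_z$ is an admissible test function).

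First I would write out the eigenvalue equation $-\mathcal{L}_{\vec w}\phi_{k,1}^{(n)}=\mu_{k,1}^{(n)}\phi_{k,1}^{(n)}$ evaluated at $y$. Using the definition \eqref{equ:DefGenerator} this reads
\begin{align*}
  \sum_{x\colon x\sim y} w_{xy}\bigl(\phi_{k,1}^{(n)}(y)-\phi_{k,1}^{(n)}(x)\bigr) = \mu_{k,1}^{(n)}\,\phi_{k,1}^{(n)}(y)\, .
\end{align*}
The left-hand side is at least $\pi_y\,\phi_{k,1}^{(n)}(y) - \sum_{x\sim y}w_{xy}\,\phi_{k,1}^{(n)}(x)$, and since $\phi_{k,1}^{(n)}$ is nonnegative and $\phi_{k,1}^{(n)}(x)\leq m_y/2$ for all neighbors $x$ of $y$, the subtracted sum is bounded by $\pi_y\,m_y/2$. (Here $y\nsim z$ guarantees that $z$ itself is not among these neighbors, although that fact is used more essentially to keep the two bounds independent — it ensures the estimate on $\phi_{k,1}^{(n)}(z)$, hidden in $\mu_{k,1}^{(n)}$, does not interfere with the neighbor bound $m_y$.) Combining, $\pi_y\,\phi_{k,1}^{(n)}(y) - \pi_y m_y/2 \leq \mu_{k,1}^{(n)}\,\phi_{k,1}^{(n)}(y)$, whence $(\pi_y-\mu_{k,1}^{(n)})\,\phi_{k,1}^{(n)}(y)\leq \pi_y m_y/2$.

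Next I would plug in $\mu_{k,1}^{(n)}\leq\pi_z<\pi_y$. Then $\pi_y-\mu_{k,1}^{(n)}\geq\pi_y-\pi_z = \pi_y(1-\pi_z/\pi_y)>0$, so dividing gives
\begin{align*}
  \phi_{k,1}^{(n)}(y) \leq \frac{\pi_y m_y/2}{\pi_y(1-\pi_z/\pi_y)} = \frac{m_y/2}{1-\pi_z/\pi_y}\, .
\end{align*}
Wait — this produces $m_y/2$ in the numerator rather than $m_y$; I would double-check the definition $m_y = 2\max_{x\sim y}\phi_{k,1}^{(n)}(x)$, which indeed gives $\sum_{x\sim y}w_{xy}\phi_{k,1}^{(n)}(x)\le \pi_y\cdot(m_y/2)$, so the clean statement in the lemma must come from bounding $\phi_{k,1}^{(n)}(x)\le m_y$ directly (or the factor $2$ is slack absorbed for convenience). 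Either way the argument closes with the claimed bound. The only genuinely delicate point is verifying that $\mu_{k,1}^{(n)}\leq\pi_z$: this is immediate from Lemma \ref{lem:TrivialUpperBound} with $l=k$, $m=1$ giving $\mu_{k,1}^{(n)}\le\pi_{k,B_n}$, together with $z\in\mathscr{B}_k^{(n)}\cap B_n$ which forces $\pi_z\ge\pi_{k,B_n}$ since $z\notin\{z_{(1,n)},\dots,z_{(k-1,n)}\}$. I expect no real obstacle beyond this bookkeeping; the heart of the argument is simply the one-site eigenvalue equation plus positivity.
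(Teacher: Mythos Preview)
Your argument is correct and is essentially the approach intended: the paper omits the proof here, pointing to \cite[Lemma 5.6]{Flegel2016}, whose proof proceeds exactly by reading the eigenvalue equation at $y$, using nonnegativity to bound the neighbor sum by $\pi_y\max_{x\sim y}\phi_{k,1}^{(n)}(x)$, and inserting $\mu_{k,1}^{(n)}\le\pi_z$ from the variational formula with test function $\delta_z$. Two minor remarks: first, your observation that the argument actually yields $m_y/2$ in the numerator is right --- the factor $2$ in the definition of $m_y$ is simply slack carried over from the original statement; second, the hypothesis $y\nsim z$ is not used in this pointwise argument and is present only for the downstream application, so your slight uncertainty about its role is justified.
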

The proof of this lemma is analogous to the proof of \cite[Lemma 5.6]{Flegel2016} and therefore we omit it here.

For the convenience of the reader, we now repeat some definitions from \cite{Flegel2016}.
For a function $g:(0,\infty)\to (0,\infty)$ and $n\in\N$ we define a percolation environment $\tilde{\boldsymbol{w}}_{g(n)}$ by setting
\begin{align}
	\tilde{w}_{g(n)} (e) := w_e \mathds{1}_{\{ w_e > g(n)\} } \qquad (e\in \edges_d)\, .
\end{align}
Thus, edges with conductance less than or equal to $g(n)$ are considered to be closed and all others keep their original conductance.
With this terminology we can now define the following clusters.
\begin{defn}\label{def:InDn}
  For a fixed function $g$ and a fixed $\eps>0$, let $\Dn$ be the unique infinite open cluster of the environment $\tilde{\boldsymbol{w}}_{g(n^{1-\eps})}$ and let $\In = B_n\backslash \Dn$ be its set of holes in $B_n$.
\end{defn}
\begin{defn}\label{def:sparse}
  We call a set $\mathscr{I}\subset\Z^d$ {\bf sparse} if the set $\mathscr{I}$ does not contain any neighboring sites.
  Further, a set $\mathscr{I}\subset\Z^d$ is {\bf $\mathbf{b}$-sparse} if for any $z\in\Z^d$ the box $B_b(z) := \left\{ x\in\Z^d \colon |x-z|_\infty \leq b \right\}\subset \Z^d$ contains at most one site of the set $\mathscr{I}$.
\end{defn}
\begin{remark}
  \label{rem:b1b2sparse}
  Let $b_1<b_2$ be natural numbers.
  If a set $\mathscr{I}\subset\Z^d$ is $b_2$-sparse, it is also $b_1$-sparse and sparse.
\end{remark}

Let us collect some facts that we already know about the cluster $\Dn$ and the set $\In$ from \cite{Flegel2016}.

\begin{remark}
  \label{rem:TechnicalStuff}
Let us recall that in Assumption \ref{ass:main} we assume that one of the two following cases occurs:
$\gamma\in (0,1/4)$ or $\gamma=0$ and there exists $\eps_1\in (0,1)$ such that the product $n^{2+\eps_1} g(n)$ converges monotonically to zero as $n$ grows to infinity.
In the case where $\gamma\in (0,1/4)$, we define $\eps_1$ as in Remark \ref{rem:RegVarEps1}.

In both cases we define $\Dn$ and $\In$ as in Definition \ref{def:InDn} with $\eps= \eps_2:=\frac{7\eps_1}{8(2+\eps_1)}$.
By virtue of \cite[Lemma 5.4]{Flegel2016} and Remark \ref{rem:b1b2sparse} we know that for any fixed $b\in\N$ the set $\In$ is $b$-sparse and therefore sparse \Pas for $n$ large enough in the sense of Definition \ref{def:sparse}.
Moreover, \cite[Lemma 5.4]{Flegel2016} implies that for any $k\in\N$ we have \Pas for $n$ large enough $z_{(1,n)}, \ldots, z_{(k+1,n)}\in\In$ and thus
\Pas for $n$ large enough there is no pair of neighbors among the the sites $z_{(1,n)}, \ldots, z_{(k+1,n)}$.
Since $F$ is continuous, the sites $z_{(1,n)}, \ldots, z_{(k+1,n)}$ are \Pas unique.
\end{remark}

The next lemma about the principal Dirichlet eigenvector $\phi_{k,1}^{(n)}$ of the auxiliary operator $-\Lw_{(k,n)}$ is very similar to \cite[Lemma 5.5]{Flegel2016}.
Indeed, we can nearly copy the proof since the deleted sites $z_{(1,n)}, \ldots, z_{(k-1,n)}$ are in $\In$, see Remark \ref{rem:TechnicalStuff}.

\begin{lem}
  \label{lem:Loc:MassDn}
  Let the function $g$ be as in \eqref{def:ThmFuncG}.
  Assume that there exists $\eps_1\in (0,1)$ such that one of the two cases occurs: $g$ varies regularly at infinity with index $\rho<-(2+\eps_1)$ or the product $n^{2+\eps_1} g(n)$ converges monotonically to zero as $n$ grows to infinity.
  Further, let $\eps= \eps_2 := \frac{7\eps_1}{8(2+\eps_1)}$ and $\Dn$ be as in Definition \ref{def:InDn}.
  Then \Pas for $n$ large enough
  \begin{align}
     \bigl\| \phi_{k,1}^{(n)} \bigr\|^2_{\ell^2(\Dn)} \leq n^{-\eps_1/2}\, .
  \end{align}
\end{lem}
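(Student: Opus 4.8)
The strategy is to transplant the argument of \cite[Lemma 5.5]{Flegel2016} to the auxiliary operator $-\Lw_{(k,n)}$, exploiting the fact that the deleted sites $z_{(1,n)},\ldots,z_{(k-1,n)}$ belong to $\In$ (Remark \ref{rem:TechnicalStuff}), so that removing them does not disturb the infinite cluster $\Dn$. First I would record the two basic inputs: (i) by Lemma \ref{lem:TrivialUpperBound} with $l=k$, $m=1$ we have $\mu_{k,1}^{(n)}\le\pi_{k,B_n}$, and since $z_{(k,n)}\in\In$ one controls $\pi_{k,B_n}$ by a quantity comparable to $g(n^{1-\eps_2})$ times a slowly varying correction — this is where the regular-variation / monotone-decay hypothesis on $g$ enters, giving $\mu_{k,1}^{(n)} \le n^{-(2+\eps_1)+o(1)}$; (ii) on the cluster $\Dn$ every conductance exceeds $g(n^{1-\eps_2})$, so the Dirichlet energy $\mathcal E^{\boldsymbol w}(\phi_{k,1}^{(n)})$ restricted to edges inside $\Dn$ dominates $g(n^{1-\eps_2})$ times the discrete Dirichlet energy (with the standard unit weights) of $\phi_{k,1}^{(n)}$ on $\Dn$.

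The core of the proof is a Poincaré/Faber–Krahn-type inequality on the cluster. Since $\In$ is $b$-sparse for every fixed $b$ (Remark \ref{rem:TechnicalStuff}), the cluster $\Dn$ contains enough of $B_n$ that a function supported in $\ell^2(\Bln)$ and restricted to $\Dn$ cannot concentrate too strongly without paying a large Dirichlet energy; quantitatively one gets a spectral-gap-type bound of the form
\begin{align*}
  \bigl\| \phi_{k,1}^{(n)} \bigr\|^2_{\ell^2(\Dn)} \;\le\; \frac{C\, n^{2}}{g(n^{1-\eps_2})}\;\mathcal E^{\boldsymbol w}\bigl(\phi_{k,1}^{(n)}\bigr)\;=\;\frac{C\,n^{2}}{g(n^{1-\eps_2})}\,\mu_{k,1}^{(n)}\, ,
\end{align*}
where the $n^2$ (the square of the diameter of $B_n$) is the price of the Dirichlet Poincaré inequality on a box of side $n$ and $C$ is deterministic. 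This is essentially \cite[Lemma 5.5]{Flegel2016} with $B_n$ replaced by $\Bln$; the replacement is harmless because deleting $k-1$ isolated sites of $\In$ changes neither $\Dn$ nor the relevant connectivity of the cluster.

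Combining the two ingredients, $\mu_{k,1}^{(n)}\le\pi_{k,B_n}\le g(n^{1-\eps_2})\,n^{o(1)}$ and the Poincaré bound, yields
\begin{align*}
  \bigl\| \phi_{k,1}^{(n)} \bigr\|^2_{\ell^2(\Dn)} \;\le\; C\,n^{2}\,\frac{\pi_{k,B_n}}{g(n^{1-\eps_2})} \;\le\; C\, n^{2}\,\frac{g(n^{1-\eps_2})\,n^{o(1)}}{g(n^{1-\eps_2})}\cdot\frac{g(n^{1-\eps_2})}{g(n^{1-\eps_2})}\, ,
\end{align*}
and after inserting the regular-variation estimate $g(n^{1-\eps_2})\le n^{(1-\eps_2)\rho + o(1)}$ together with $\eps_2 = \frac{7\eps_1}{8(2+\eps_1)}$ and $\rho<-(2+\eps_1)$ (or the corresponding monotone-decay bound when $\gamma=0$), the exponent of $n$ on the right-hand side is $2 + (1-\eps_2)\rho + o(1) < 2 - (1-\eps_2)(2+\eps_1) + o(1)$, and a direct computation shows $2-(1-\eps_2)(2+\eps_1) = -\eps_1/2 - \eps_1/8 < -\eps_1/2$, so that the bound $n^{-\eps_1/2}$ holds \Pas for $n$ large enough. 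The one delicate point — and the step I expect to be the main obstacle — is making the estimate $\pi_{k,B_n}\le g(n^{1-\eps_2})\,n^{o(1)}$ rigorous and uniform over the almost-sure event: this requires the large-deviation control of the order statistics of $\pi$ over $B_n$ from \cite[Lemma 5.4]{Flegel2016}, i.e.\ that simultaneously $z_{(1,n)},\ldots,z_{(k,n)}\in\In$ and that $\pi_{k,B_n}$ does not exceed the threshold that defines $\In$ by more than a slowly varying factor. Once that is in hand, the rest is the deterministic Poincaré estimate and bookkeeping of exponents, exactly as in \cite[Lemma 5.5]{Flegel2016}.
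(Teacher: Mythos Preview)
Your overall strategy is right and matches the paper: transplant the proof of \cite[Lemma~5.5]{Flegel2016} to the auxiliary operator $-\Lw_{(k,n)}$, using that the deleted sites lie in $\In$ so that $\Dn$ is unaffected, and replace the bound $\lambda_1^{(n)}\le\pi_{1,B_n}$ there by $\mu_{k,1}^{(n)}\le\pi_{k,B_n}$ (Lemma~\ref{lem:TrivialUpperBound}). The Poincar\'e-type comparison on $\Dn$ is also the right engine.

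The gap is in your exponent bookkeeping, and it is not cosmetic. First, your displayed chain
\[
C\,n^{2}\,\frac{g(n^{1-\eps_2})\,n^{o(1)}}{g(n^{1-\eps_2})}\cdot\frac{g(n^{1-\eps_2})}{g(n^{1-\eps_2})}
\]
collapses to $C\,n^{2+o(1)}$, so as written it proves nothing; there is no surviving factor of $g$ into which to insert the regular-variation estimate. Second, your arithmetic is wrong: $2-(1-\eps_2)(2+\eps_1)=2-\bigl((2+\eps_1)-\tfrac{7\eps_1}{8}\bigr)=-\eps_1/8$, not $-\eps_1/2-\eps_1/8$, so even the exponent you thought you had does not reach $-\eps_1/2$. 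The underlying reason both errors are fatal is that you use the \emph{same} scale $\eps_2$ for the cluster threshold defining $\Dn$ and for the upper bound on $\pi_{k,B_n}$; this produces a ratio $g(n^{1-\eps_2})/g(n^{1-\eps_2})=1$ with no decay. The paper avoids this by bounding $\pi_{k,B_n}$ at a \emph{strictly smaller} scale: it invokes \cite[Lemma~2.6]{Flegel2016} (not Lemma~5.4) to get $\pi_{k,B_n}\le c_1\,g(n^{1-\eps_3})$ with $\eps_3=\eps_1/(8(2+\eps_1))<\eps_2$, so that the ratio $g(n^{1-\eps_3})/g(n^{1-\eps_2})$ carries genuine polynomial decay, and the specific choices $\eps_2=7\eps_3$ are tuned precisely so that the final exponent lands below $-\eps_1/2$. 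Note also that ``$z_{(k,n)}\in\In$'' by itself does not bound $\pi_{z_{(k,n)}}$ from above (membership in $\In$ only says the site is cut off from the infinite cluster, not that all incident conductances are small); the upper bound on $\pi_{k,B_n}$ really is a separate extreme-value input.
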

\begin{proof}
  The proof follows the lines of the proof of \cite[Lemma 5.5]{Flegel2016} until \emph{right before} (5.8).
  Here, we then apply Lemma \ref{lem:TrivialUpperBound} to infer that
  \begin{align*}
    \pi_{k, B_n}\geq \mu_{k,1}^{(n)} = \mathcal{E}^{\vec{w}} \left( \phi_{k,1}^{(n)} \right)\, .
  \end{align*}
  Moreover, by virtue of \cite[Lemma 2.6]{Flegel2016} there exists $c_1<\infty$ such that \Pas for $n$ large enough
  \begin{align*}
  c_1 g(n^{1-\eps_3}) \geq \pi_{k, B_n}
\end{align*}
  with $\eps_3 = \eps_1 (8(2+\eps_1))^{-1}$.
  The rest of the proof follows again the lines of the proof of \cite[Lemma 5.5]{Flegel2016}.
\end{proof}

From Lemma \ref{lem:Loc:MassDn} to localization in a single site, the main two ingredients are Lemma \ref{lem:uniquemax} and the following result about the order statistics of $\left\{ \pi_x \right\}_{x\in B_n}$.

\begin{lem}[{\cite[Lemma 5.10]{Flegel2016}}]
\label{lem:QuotOrder}
  Let Assumption \ref{ass:main} be true and let $\varepsilon>0$ and $k\in\N$.
  Then \Pas for $n$ large enough
  \begin{align}
    1-\frac{\pi_{k,B_n}}{\pi_{k+1,B_n}} > n^{-\varepsilon}\, .
  \end{align}
\end{lem}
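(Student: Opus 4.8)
The plan is to fix the given $\varepsilon$ (and $k$) and to prove, via Borel--Cantelli, that the ``bad'' events $A_n:=\{1-\pi_{k,B_n}/\pi_{k+1,B_n}\le n^{-\varepsilon}\}$ occur only $\Pr$-a.s.\ finitely often. A naive application fails because $\Pr[A_n]$ is of order $n^{-\varepsilon}$: writing $\alpha:=2d\gamma$ and letting $a_n$ be the lower $1/|B_n|$-quantile of $F_\pi$, the rescaled field $\{\pi_z/a_n\}_{z\in B_n}$ converges to a Poisson point process of intensity proportional to $x^{\alpha-1}\,\d x$, and its $k$th and $(k+1)$th points sit at a strictly positive but $O(1)$-distributed (hence non-vanishing) relative distance; so $\sum_n\Pr[A_n]=\infty$. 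I therefore pass to the geometric subsequence $n_j:=2^j$, bound $\Pr\bigl[\bigcup_{n_j\le n<n_{j+1}}A_n\bigr]$ by a summable quantity, and transfer the conclusion to every $n$ via the nesting $B_{n_j}\subseteq B_n\subseteq B_{n_{j+1}}$.

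\emph{Deterministic envelopes.} First I would confine the small order statistics between explicit polynomial scales. Since $\#\{z\in B_n:\pi_z\le u\}$ dominates the genuinely independent count over a spacing-$2$ sublattice, a Chernoff estimate makes $\Pr[\pi_{k+1,B_n}>U_n]$ decay faster than every polynomial as soon as $|B_n|F_\pi(U_n)\to\infty$; hence I may take $U_n:=n^{c'}a_n$ with $c'>0$ an \emph{arbitrarily small} constant, fixed later as a function of $\varepsilon$. A first-moment bound gives $\Pr[\pi_{1,B_n}<L_n]\lesssim|B_n|F_\pi(L_n)$, summable for $L_n:=n^{-c}a_n$ with $c$ a large model constant, and a first-moment estimate of the type of \cite[Lemma 5.4]{Flegel2016} (where $\gamma<1/4$ makes the relevant exponent negative) shows that, $\Pr$-a.s.\ eventually, the low-value set $\{z:\pi_z\le U_n\}$ contains no two $\ell^1$-neighbours. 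Collecting these a.s.-eventual facts at the scales $n_j,n_{j+1}$ into an event $\mathcal G_j$, one has $\sum_j\Pr[\mathcal G_j^c]<\infty$.

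\emph{Block estimate.} On $\mathcal G_j$, if some $n\in[n_j,n_{j+1})$ lies in $A_n$, then $z_0:=z_{(k,n)}$ and $z_1:=z_{(k+1,n)}$ lie in $B_{n_{j+1}}$, are at $\ell^1$-distance $\ge2$ (sparsity, hence $\pi_{z_0}\perp\pi_{z_1}$), satisfy $\pi_{z_1}=\pi_{k+1,B_n}\le\pi_{k+1,B_{n_j}}\le U_{n_j}$ (nesting and $\mathcal G_j$) and $(1-n_j^{-\varepsilon})\pi_{z_1}\le\pi_{z_0}<\pi_{z_1}$ (from $A_n$ and $n\ge n_j$). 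Summing over the at most $|B_{n_{j+1}}|^2$ such pairs and conditioning on $\pi_{z_1}$,
\[
\Pr\Bigl[\,\bigcup_{n_j\le n<n_{j+1}}A_n\ \cap\ \mathcal G_j\,\Bigr]
\ \le\ |B_{n_{j+1}}|^2\,\E\Bigl[\mathds{1}_{\pi\le U_{n_j}}\bigl(F_\pi(\pi)-F_\pi\bigl((1-n_j^{-\varepsilon})\pi\bigr)\bigr)\Bigr]
\ \le\ |B_{n_{j+1}}|^2\,F_\pi(U_{n_j})^2\,\sup_{0<t\le U_{n_j}}\Bigl(1-\frac{F_\pi\bigl((1-n_j^{-\varepsilon})t\bigr)}{F_\pi(t)}\Bigr),
\]
where $\pi$ on the right has law $F_\pi$. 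By Potter's bound, $|B_{n_{j+1}}|^2F_\pi(U_{n_j})^2\le C\,n_j^{\theta}$ with $\theta=\theta(c')>0$ as small as we please once $c'$ is small, so the whole estimate is governed by the final supremum.

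\emph{The crux.} The main obstacle is precisely this local estimate on $F_\pi$ near $0$: one needs $\sup_{0<t\le U}\bigl(1-F_\pi((1-s)t)/F_\pi(t)\bigr)\le C\,s$ uniformly as $U\downarrow0$ and $s=n_j^{-\varepsilon}\downarrow0$. Potter's inequality alone only gives $\le\delta+Cs$ with a \emph{fixed} spurious additive constant $\delta>0$, which is useless at scale $s$. The resolution is the structural hypothesis of Assumption \ref{ass:main}: the condition $F(ab)\ge bF(a)$ for small $a$ is equivalent to $t\mapsto F(t)/t$ being non-increasing near $0$, which in particular bounds the density by $f(t)\le F(t)/t$; combined with $F$ regularly varying of index $\gamma<1/4$, this lifts---by a convolution estimate of the type of \cite[Lemma 5.8]{Flegel2016}, bounding $f_\pi=f^{\ast2d}$ against $F_\pi(t)/t$---to the statement that $t\mapsto F_\pi(t)/t^{\beta}$ is non-increasing near $0$ for some finite $\beta$. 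Bernoulli's inequality then yields $F_\pi((1-s)t)\ge(1-s)^{\beta}F_\pi(t)\ge(1-\max(1,\beta)s)F_\pi(t)$, i.e.\ the desired bound with $C=\max(1,\beta)$. Inserting it gives $\Pr[\bigcup_{n_j\le n<n_{j+1}}A_n\cap\mathcal G_j]\le C'\,n_j^{\theta-\varepsilon}$; choosing $c'$ small enough that $\theta<\varepsilon$ makes this summable in $j$, and Borel--Cantelli together with $\sum_j\Pr[\mathcal G_j^c]<\infty$ then gives that $\Pr$-a.s.\ no $A_n$ occurs for $n$ large, which is the claim. The case $\gamma=0$ is handled identically, replacing the regular-variation arguments by the hypothesis that $n^{2+\eps_1}g(n)$ decreases monotonically to~$0$.
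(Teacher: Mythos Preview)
This lemma carries no proof in the present paper; it is quoted verbatim from \cite[Lemma~5.10]{Flegel2016}, so there is no in-paper argument to compare against. Your proposal is therefore an attempt to supply what the paper simply imports.

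Your strategy---Borel--Cantelli along the geometric subsequence $n_j=2^j$, an upper envelope $U_n$ for $\pi_{k+1,B_n}$, sparsity of the low-$\pi$ set to force independence of $\pi_{z_0}$ and $\pi_{z_1}$, and a union bound over ordered pairs in $B_{n_{j+1}}$---is sound and yields a correct proof when $\gamma>0$. Two points, however, are under-justified. First, the ``crux'': the lift from $f(t)\le F(t)/t$ to $f_\pi(t)\le C\,F_\pi(t)/t$ (equivalently, $F_\pi(t)/t^{C}$ non-increasing near $0$) is true but does not follow from \cite[Lemma~5.8]{Flegel2016} as stated; it needs a short inductive convolution argument, splitting $\int_0^t g_m(u)f(t-u)\,\d u$ at $t/2$ and using $F(t)/F(t/2)\le 2$ together with the inductive bound $G_m(t)/G_m(t/2)\le 2^{C_m}$. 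Once this is in hand, your Bernoulli estimate $1-F_\pi((1-s)t)/F_\pi(t)\le Cs$ is immediate. Second, the case $\gamma=0$ is \emph{not} ``handled identically'': with $F_\pi$ slowly varying, the choice $U_n=n^{c'}a_n$ gives $|B_n|F_\pi(U_n)\to 1$, so neither the Chernoff bound behind the upper envelope nor the Potter estimate for $|B_{n_{j+1}}|^2F_\pi(U_{n_j})^2\le n_j^{\theta}$ works as written. The fix is to pick $U_n$ through $F_\pi$ directly, e.g.\ via $F_\pi(U_n)=n^{\varepsilon/4}/|B_n|$, after first reducing (monotonically in $\varepsilon$) to small $\varepsilon$. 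Finally, the lower envelope $L_n$ you introduce is never used in the block estimate and can be dropped.
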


The next lemma therefore follows.

\begin{lem}
  \label{lem:MuPhi1}
  Let $k\in\N$.
  Under Assumption \ref{ass:main}, it follows that \Pas for $n$ large enough
  \begin{align}
    \phi_{k,1}^{(n)} \left(z_{(k,n)}\right) \geq \sqrt{1 - n^{-\eps_1/4}}\, .
    \label{equ:lem1:EigVec}
  \end{align}
  This implies that \Pas for $n$ large enough
  \begin{align}
    \label{equ:lem1:EigVal:LowerBound}
    \mu_{k,1}^{(n)} \geq \left( 1 - 2n^{-\eps_1/8} \right) \pi_{k,B_n}\, .
  \end{align}
\end{lem}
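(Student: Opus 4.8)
The plan is to establish \eqref{equ:lem1:EigVec} first and then read off \eqref{equ:lem1:EigVal:LowerBound} by feeding $\phi_{k,1}^{(n)}$ into the Dirichlet energy. Write $\phi := \phi_{k,1}^{(n)}$ and $z := z_{(k,n)}$. By Remark \ref{rem:PF} we may take $\phi \geq 0$, and we use $\Dn,\In$ as fixed in Remark \ref{rem:TechnicalStuff}, so that \Pas for $n$ large the set $\In$ is $b$-sparse for every fixed $b$ and $z_{(1,n)},\dots,z_{(k+1,n)}\in\In$; in particular $z\in\In$ and the deleted sites $z_{(1,n)},\dots,z_{(k-1,n)}$ lie in $\In$, hence $\Dn\subseteq\mathscr{B}_k^{(n)}$. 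Since $\phi$ is supported on $\mathscr{B}_k^{(n)}$ this gives the decomposition $1=\|\phi\|_2^2=\phi(z)^2+\|\phi\|_{\ell^2(\In\cap\mathscr{B}_k^{(n)}\setminus\{z\})}^2+\|\phi\|_{\ell^2(\Dn)}^2$. Lemma \ref{lem:Loc:MassDn} already controls the last term, so it remains to bound the middle one.

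For the middle term I would apply Lemma \ref{lem:uniquemax} with the above $z$ and each $y\in\In\cap\mathscr{B}_k^{(n)}$ with $y\neq z$. Its hypotheses hold: $\pi_z=\pi_{k,B_n}<\pi_{k+1,B_n}\leq\pi_y$ because $y\notin\{z_{(1,n)},\dots,z_{(k,n)}\}$ and $F$ is continuous, while $y\nsim z$ because $y$ and $z$ both lie in the sparse set $\In$. Combining Lemma \ref{lem:uniquemax} with Lemma \ref{lem:QuotOrder} (for a small $\eps>0$ fixed later) yields $\phi(y)\leq m_y/\bigl(1-\pi_{k,B_n}/\pi_{k+1,B_n}\bigr)<n^{\eps}m_y$ with $m_y=2\max_{x\sim y}\phi(x)$, and since $\In$ is sparse every such neighbour $x$ of $y$ lies in $\Dn$ (or outside $B_n$, where $\phi=0$). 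Squaring gives $\phi(y)^2<4n^{2\eps}\sum_{x\sim y,\,x\in\Dn}\phi(x)^2$; summing over the admissible $y$ and interchanging the order of summation, each $x\in\Dn$ is counted at most once (here $b$-sparseness of $\In$ is used: a vertex has at most one neighbour in $\In$), so $\|\phi\|_{\ell^2(\In\cap\mathscr{B}_k^{(n)}\setminus\{z\})}^2<4n^{2\eps}\|\phi\|_{\ell^2(\Dn)}^2\leq 4n^{2\eps-\eps_1/2}$ by Lemma \ref{lem:Loc:MassDn}. Fixing $\eps<\eps_1/8$ and inserting this together with $\|\phi\|_{\ell^2(\Dn)}^2\leq n^{-\eps_1/2}$ into the decomposition gives $\phi(z)^2\geq 1-n^{-\eps_1/4}$ for $n$ large, i.e.\ \eqref{equ:lem1:EigVec}.

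For \eqref{equ:lem1:EigVal:LowerBound} I would write the Dirichlet energy as a sum over edges and keep only the edges incident to $z$: $\mu_{k,1}^{(n)}=\mathcal{E}^{\vec{w}}(\phi)\geq\sum_{y\sim z}w_{zy}(\phi(z)-\phi(y))^2\geq\pi_z\phi(z)^2-2\phi(z)\sum_{y\sim z}w_{zy}\phi(y)$, where the last step expands the square and drops the nonnegative terms $\phi(y)^2$. Since the neighbours of $z$ lie in $\Dn$, Cauchy--Schwarz gives $\sum_{y\sim z}w_{zy}\phi(y)\leq\bigl(\sum_{y\sim z}w_{zy}^2\bigr)^{1/2}\bigl(\sum_{y\sim z}\phi(y)^2\bigr)^{1/2}\leq\pi_z\,n^{-\eps_1/4}$, using $\sum_{y\sim z}w_{zy}^2\leq\pi_z^2$ and $\sum_{y\sim z}\phi(y)^2\leq\|\phi\|_{\ell^2(\Dn)}^2\leq n^{-\eps_1/2}$. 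With $\phi(z)\leq 1$, $\pi_z=\pi_{k,B_n}$, and \eqref{equ:lem1:EigVec} this gives $\mu_{k,1}^{(n)}\geq\pi_{k,B_n}\bigl(\phi(z)^2-2n^{-\eps_1/4}\bigr)\geq\pi_{k,B_n}\bigl(1-3n^{-\eps_1/4}\bigr)\geq\pi_{k,B_n}\bigl(1-2n^{-\eps_1/8}\bigr)$ for $n$ large, which is \eqref{equ:lem1:EigVal:LowerBound}.

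The \emph{delicate point} is the summation in the second paragraph: the pointwise bound $\phi(y)<n^{\eps}m_y$ is immediate, but summing it naively over the up to $|B_n|$ relevant sites $y$ would cost a polynomial factor $|B_n|$ and ruin the estimate. The resolution is to rewrite $m_y^2$ as (a multiple of) $\sum_{x\sim y}\phi(x)^2$ and exploit $b$-sparseness of $\In$: interchanging the two sums then collapses the double sum to at most $\|\phi\|_{\ell^2(\Dn)}^2$, which is already $n^{-\eps_1/2}$-small by Lemma \ref{lem:Loc:MassDn}. Everything else is elementary bookkeeping with the order statistics of $\pi$ and the sparseness facts recorded in Remark \ref{rem:TechnicalStuff}.
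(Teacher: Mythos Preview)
Your proof is correct and follows essentially the same route as the paper: for \eqref{equ:lem1:EigVec} the paper simply invokes Remark~\ref{rem:TechnicalStuff}, Lemma~\ref{lem:uniquemax}, Lemma~\ref{lem:QuotOrder} and says the argument is completely analogous to \cite[Theorem~1.8]{Flegel2016}, and what you wrote is precisely that argument spelled out; for \eqref{equ:lem1:EigVal:LowerBound} both you and the paper drop to the edges incident to $z_{(k,n)}$ in the Dirichlet energy, the only cosmetic difference being that the paper bounds each neighbour value pointwise by $n^{-\eps_1/8}$ (from $\sum_{x\neq z}\phi(x)^2\le n^{-\eps_1/4}$) while you use Cauchy--Schwarz together with Lemma~\ref{lem:Loc:MassDn}. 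One tiny wording point: when you say ``$b$-sparseness'' ensures each $x\in\Dn$ has at most one neighbour in $\In$, you are really using $1$-sparseness in the sense of Definition~\ref{def:sparse} (mere sparseness would not suffice), which is indeed available by Remark~\ref{rem:TechnicalStuff}.
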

\begin{proof}
In view of Remark \ref{rem:TechnicalStuff}, Lemma \ref{lem:uniquemax} and the extreme value result Lemma \ref{lem:QuotOrder}, the proof of \eqref{equ:lem1:EigVec} is completely analogous to the proof of \cite[Theorem 1.8]{Flegel2016} and thus we omit it here.
For \eqref{equ:lem1:EigVal:LowerBound} we observe that since $\mu_{k,1}^{(n)} = \langle \phi_{k,1}^{(n)}, \Lw \phi_{k,1}^{(n)} \rangle$ it follows that \Pas for $n$ large enough
\begin{align*}
  \mu_{k,1}^{(n)} &\;\geq\; \sum_{x\colon x\sim z_{(k,n)}} w_{xz_{(k,n)}} \left( \phi_{k,1}^{(n)} (z_{(k,n)}) - \phi_{k,1}^{(n)} (x) \right)^2
  \;\geq\; \left( n^{-\eps_1/8} - \sqrt{1 - n^{-\eps_1/4}} \right)^2 \pi_{z_{(k,n)}}\, .
\end{align*}
\end{proof}

\subsection{Orthogonality of eigenvectors}
The next very simple ingredient of our proof is due to the orthogonality of the eigenvectors.
\begin{lem}
\label{lem:orth}
  Let $\varepsilon>0$, let $j,l,m,n\in\N$ with $j<m$ and let $\phi_{l,j}^{(n)} (z) \geq \sqrt{1 - n^{-\varepsilon/4}}$.
  \begin{align}
    \left| \phi_{l,m}^{(n)} (z)\right| &\leq n^{-\varepsilon/8}\, .
  \end{align}
\end{lem}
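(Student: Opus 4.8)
The plan is to exploit the fact that the eigenvectors $\phi_{l,1}^{(n)},\ldots,\phi_{l,|\Bln|}^{(n)}$ of the self-adjoint operator $-\Lw_{(l,n)}$ form an orthonormal basis of $\ell^2(\Bln)$, so in particular $\langle \phi_{l,j}^{(n)}, \phi_{l,m}^{(n)}\rangle = \delta_{jm}$. Since $j<m$ we have $j\neq m$ and hence $\langle \phi_{l,j}^{(n)}, \phi_{l,m}^{(n)}\rangle = 0$. Both functions are supported on $\Bln\subseteq B_n$, and both lie in the unit sphere of $\ell^2(\Bln)$. The strategy is to split the inner product into the contribution at the single site $z$ and the contribution from all remaining sites, and then bound the remaining contribution by Cauchy--Schwarz using the mass that $\phi_{l,j}^{(n)}$ has \emph{left over} away from $z$.

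Concretely, first I would write
\[
0 \;=\; \left\langle \phi_{l,j}^{(n)}, \phi_{l,m}^{(n)}\right\rangle \;=\; \phi_{l,j}^{(n)}(z)\,\phi_{l,m}^{(n)}(z) \;+\; \sum_{x\neq z} \phi_{l,j}^{(n)}(x)\,\phi_{l,m}^{(n)}(x)\, ,
\]
so that
\[
\left| \phi_{l,j}^{(n)}(z)\right|\,\left| \phi_{l,m}^{(n)}(z)\right| \;=\; \left| \sum_{x\neq z} \phi_{l,j}^{(n)}(x)\,\phi_{l,m}^{(n)}(x)\right| \;\leq\; \Bigl( \sum_{x\neq z}\phi_{l,j}^{(n)}(x)^2\Bigr)^{1/2}\Bigl( \sum_{x\neq z}\phi_{l,m}^{(n)}(x)^2\Bigr)^{1/2}\, ,
\]
by Cauchy--Schwarz. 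The second factor is at most $\|\phi_{l,m}^{(n)}\|_2 = 1$. For the first factor, the hypothesis $\phi_{l,j}^{(n)}(z)\geq\sqrt{1-n^{-\varepsilon/4}}$ together with $\|\phi_{l,j}^{(n)}\|_2=1$ gives $\sum_{x\neq z}\phi_{l,j}^{(n)}(x)^2 = 1 - \phi_{l,j}^{(n)}(z)^2 \leq 1 - (1 - n^{-\varepsilon/4}) = n^{-\varepsilon/4}$, whose square root is $n^{-\varepsilon/8}$. Hence $\left|\phi_{l,j}^{(n)}(z)\right|\,\left|\phi_{l,m}^{(n)}(z)\right| \leq n^{-\varepsilon/8}$.

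Finally, since $\phi_{l,j}^{(n)}(z)\geq\sqrt{1-n^{-\varepsilon/4}}$, for $n$ large enough (or in fact for all $n$ with $n^{-\varepsilon/4}\leq 1$, and trivially otherwise since then the claimed bound $n^{-\varepsilon/8}\geq 1\geq\|\phi_{l,m}^{(n)}\|_\infty$) we have $\phi_{l,j}^{(n)}(z)$ bounded below, so dividing through yields $\left|\phi_{l,m}^{(n)}(z)\right| \leq n^{-\varepsilon/8}/\sqrt{1-n^{-\varepsilon/4}}$. To get exactly the stated bound $\left|\phi_{l,m}^{(n)}(z)\right|\leq n^{-\varepsilon/8}$ without the denominator, I would instead not divide but argue directly: if $\left|\phi_{l,m}^{(n)}(z)\right| > n^{-\varepsilon/8}$ were to hold, then combined with $\phi_{l,j}^{(n)}(z)\geq\sqrt{1-n^{-\varepsilon/4}} \geq n^{-\varepsilon/8}$... actually the cleanest route is simply: either way, note $\left|\phi_{l,m}^{(n)}(z)\right| \leq \bigl(\sum_{x\neq z}\phi_{l,j}^{(n)}(x)^2\bigr)^{1/2}\cdot\left|\phi_{l,m}^{(n)}(z)\right|/\phi_{l,j}^{(n)}(z)$ is not quite it either. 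The honest statement that the argument delivers is $\left|\phi_{l,m}^{(n)}(z)\right|\leq n^{-\varepsilon/8}(1-n^{-\varepsilon/4})^{-1/2}$, and presumably the paper intends this up to the harmless factor or uses it only for $n$ large; I expect the only subtlety — and the only thing worth a sentence of care — is this cosmetic handling of the $(1-n^{-\varepsilon/4})^{-1/2}$ prefactor, everything else being a one-line Cauchy--Schwarz estimate. Replacing $\varepsilon$ by a slightly smaller exponent absorbs the prefactor for large $n$, which is all that is needed downstream.
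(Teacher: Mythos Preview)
Your approach is exactly the paper's: orthogonality plus Cauchy--Schwarz. The only thing you miss is the small refinement that removes the annoying $(1-n^{-\varepsilon/4})^{-1/2}$ factor. Instead of bounding $\sum_{x\neq z}\phi_{l,m}^{(n)}(x)^2$ crudely by $1$, the paper keeps the exact value $1-\bigl(\phi_{l,m}^{(n)}(z)\bigr)^2$, obtaining
\[
\bigl(\phi_{l,m}^{(n)}(z)\bigr)^2 \;\leq\; \frac{n^{-\varepsilon/4}}{1-n^{-\varepsilon/4}}\Bigl(1-\bigl(\phi_{l,m}^{(n)}(z)\bigr)^2\Bigr)\, .
\]
Setting $a=\bigl(\phi_{l,m}^{(n)}(z)\bigr)^2$ and $t=n^{-\varepsilon/4}$, this reads $a(1-t)\leq t(1-a)$, i.e.\ $a\leq t$, which is exactly $\bigl|\phi_{l,m}^{(n)}(z)\bigr|\leq n^{-\varepsilon/8}$. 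The case $n=1$ is handled separately (trivially, since then $n^{-\varepsilon/8}=1$). So no exponent-shrinking or ``for $n$ large'' caveat is needed.
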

\begin{proof}
  For $n=1$ the claim is immediate.
  For $n\geq 2$ we observe that since the eigenvectors $\phi_{l,j}^{(n)}$ and $\phi_{l,m}^{(n)}$ are orthogonal to each other, it follows that
  \begin{align*}
    \phi_{l,m}^{(n)} (z) &= -\frac{\sum_{x\neq z} \phi_{l,j}^{(n)}(x) \phi_{l,m}^{(n)} (x)}{\phi_{l,j}^{(n)}(z)}\, .
  \end{align*}
  By the Cauchy-Schwarz inequality it follows that for $n$ greater than one
  \begin{align*}
    \left( \phi_{l,m}^{(n)} (z)\right)^2 &\;\leq\; \frac{\left( \sum_{x\neq z}\left(\phi_{l,j}^{(n)}(x)\right)^2 \right) \left( 1 - \left(\phi_{l,m}^{(n)}(z)\right)^2\right)}{\left( \phi_{l,j}^{(n)} (z)\right)^2}
    \leq \frac{n^{-\varepsilon/4}}{1 - n^{-\varepsilon/4}}\left( 1 - \left(\phi_{l,m}^{(n)}(z)\right)^2\right)
  \end{align*}
  where we have also used that the assumption implies that $\sum_{x\neq z}\left(\phi_{l,j}^{(n)}(x)\right)^2 \leq  n^{-\varepsilon/4}$.
  The claim follows.
\end{proof}

\subsection{Higher eigenvalues and -vectors}
We establish the connection to the original eigenvalues and -vectors via the Bauer-Fike theorem \cite{Bauer1960}, which we cite below from \cite[Lemma 11.2]{JKO1994}.
\begin{lem}[{\cite[Lemma 11.2]{JKO1994}}]
  \label{lem:BauerFike}
  Let $A\colon H\to H$ be a linear self-adjoint compact operator in a Hilbert space $H$.
  Let $\mu\in\R$, and let $u\in H$ be such that $\| u \|_H =1$ and
  \begin{align}
    \| Au - \mu u\|_H \leq \alpha\, ,\qquad \alpha>0\, .
  \end{align}
  Then there exists an eigenvalue $\mu_i$ of the operator $A$ such that
  \begin{align}
    |\mu_i - \mu|\leq \alpha\, .
  \end{align}
  Moreover, for any $\beta>\alpha$, there exists a vector $\overline u$ such that
  \begin{align}
    \| u-\overline u \|_H \leq 2\alpha\beta^{-1}\, , \qquad \| \overline u \|_H = 1
  \end{align}
  and $\overline u$ is a linear combination of the eigenvectors of operator $A$ corresponding to the eigenvalues from the interval $[\mu-\beta, \mu+\beta]$.
\end{lem}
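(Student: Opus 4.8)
The plan is to invoke the spectral theorem for compact self-adjoint operators and thereby reduce the whole statement to an elementary estimate on $\ell^2$-sequences. Fix an orthonormal basis $\{e_j\}_{j\in J}$ of $H$ consisting of eigenvectors of $A$, say $Ae_j=\mu_j e_j$; such a basis exists by the spectral theorem (if $A$ has nontrivial kernel then $0$ occurs among the $\mu_j$, and if $H$ is infinite dimensional the only possible accumulation point of the $\mu_j$ is $0$). Writing $u=\sum_j c_j e_j$ with $\sum_j |c_j|^2=\|u\|_H^2=1$, one obtains $Au-\mu u=\sum_j (\mu_j-\mu)c_j e_j$ and hence
\[
  \sum_{j\in J}(\mu_j-\mu)^2|c_j|^2=\|Au-\mu u\|_H^2\leq\alpha^2 .
\]

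For the first assertion I would argue by contradiction. If $|\mu_j-\mu|>\alpha$ for every $j$, then, since $\alpha>0$ and at least one $c_j$ is nonzero (because $\|u\|_H=1$), each summand satisfies $(\mu_j-\mu)^2|c_j|^2\geq\alpha^2|c_j|^2$ with strict inequality for that $j$; summing gives $\sum_j(\mu_j-\mu)^2|c_j|^2>\alpha^2\sum_j|c_j|^2=\alpha^2$, contradicting the displayed bound. Hence there is an eigenvalue $\mu_i$ of $A$ with $|\mu_i-\mu|\leq\alpha$.

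For the second assertion, fix $\beta>\alpha$ and split $J=J_{\mathrm{good}}\sqcup J_{\mathrm{bad}}$ according to whether $|\mu_j-\mu|\leq\beta$ or not. Set $v:=\sum_{j\in J_{\mathrm{good}}}c_j e_j$, the orthogonal projection of $u$ onto the closed span of the eigenvectors whose eigenvalues lie in $[\mu-\beta,\mu+\beta]$. On $J_{\mathrm{bad}}$ we have $(\mu_j-\mu)^2>\beta^2$, so $\beta^2\|u-v\|_H^2=\beta^2\sum_{j\in J_{\mathrm{bad}}}|c_j|^2\leq\sum_{j\in J}(\mu_j-\mu)^2|c_j|^2\leq\alpha^2$, i.e.\ $\|u-v\|_H\leq\alpha/\beta<1$. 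In particular $\|v\|_H\geq 1-\alpha/\beta>0$, so $\overline u:=v/\|v\|_H$ is well defined, has unit norm, and is a linear combination of the eigenvectors corresponding to eigenvalues in $[\mu-\beta,\mu+\beta]$. Finally $\big|\,\|v\|_H-1\,\big|=\big|\,\|v\|_H-\|u\|_H\,\big|\leq\|u-v\|_H$, and the triangle inequality yields $\|u-\overline u\|_H\leq\|u-v\|_H+\big|\,\|v\|_H-1\,\big|\leq 2\alpha\beta^{-1}$, as required.

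I do not expect a serious obstacle here: once the spectral theorem is granted, the proof is pure bookkeeping. The only points that deserve a little care are the treatment of a possibly non-trivial kernel (equivalently, the essential spectrum at $0$), which is absorbed into the basis $\{e_j\}$, and the strictness used in the contradiction step; both are handled above.
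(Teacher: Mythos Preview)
Your proof is correct and is precisely the standard spectral-theoretic argument. Note that the paper does not actually prove this lemma: it is quoted verbatim from \cite[Lemma~11.2]{JKO1994} and used as a black box, so there is no in-paper proof to compare against. Your treatment of the kernel (absorbing it into the eigenbasis with eigenvalue $0$) and the strictness in the contradiction step are both handled cleanly; the normalization step $\overline u=v/\|v\|_H$ together with the reverse triangle inequality is exactly how one recovers the factor $2$ in $2\alpha\beta^{-1}$.
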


Here comes the first application of Lemma \ref{lem:BauerFike}.
\begin{lem}
  \label{lem:MuClose1}
  Let $l\in\N$ and $m\in\{1,\ldots,|B_n|-l+1\}$.
  Under Assumption \ref{ass:main} there exists $i\in\{1,\ldots, |B_n|-l+1\}$ such that
  \begin{align}
    \label{equ:MuClose1}
    \left|\mu_{l,i}^{(n)} - \mu_{l+m,1}^{(n)} \right| \leq n^{-\eps_1/4}\cdot \pi_{l+m-1,B_n} \, .
  \end{align}
\end{lem}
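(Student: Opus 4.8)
The plan is to invoke the Bauer–Fike theorem, Lemma \ref{lem:BauerFike}, with the self-adjoint operator $A=-\Lw_{(l,n)}$ on the Hilbert space $H=\ell^2(\Bln)$ (which is finite-dimensional, so the compactness hypothesis is automatic), with the scalar $\mu=\mu_{l+m,1}^{(n)}$, and with the unit vector $u=\phi_{l+m,1}^{(n)}$. Because $\supp u\subseteq\mathscr{B}^{(n)}_{l+m}\subseteq\Bln$, we genuinely have $u\in H$ with $\|u\|_H=1$, and the eigenvalues of $A$ on $H$ are exactly $\mu_{l,1}^{(n)},\dots,\mu_{l,|B_n|-l+1}^{(n)}$. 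Hence, once we establish the residual bound $\|(-\Lw_{(l,n)})u-\mu_{l+m,1}^{(n)}u\|_{\ell^2(\Bln)}\le n^{-\eps_1/4}\pi_{l+m-1,B_n}=:\alpha$, the first conclusion of Lemma \ref{lem:BauerFike} produces an index $i\in\{1,\dots,|B_n|-l+1\}$ with $|\mu_{l,i}^{(n)}-\mu_{l+m,1}^{(n)}|\le\alpha$, which is \eqref{equ:MuClose1}. Everything therefore reduces, \Pas for $n$ large enough, to bounding $\alpha$.

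To identify the residual, observe that $\mathds{1}_{\Bln}u=u=\mathds{1}_{\mathscr{B}^{(n)}_{l+m}}u$, so $(-\Lw_{(l,n)})u=\mathds{1}_{\Bln}(-\Lw)u$ and $\mu_{l+m,1}^{(n)}u=(-\Lw_{(l+m,n)})u=\mathds{1}_{\mathscr{B}^{(n)}_{l+m}}(-\Lw)u$; subtracting, the residual is supported on the $m$ sites $z_{(l,n)},\dots,z_{(l+m-1,n)}$ and equals $\bigl((-\Lw)u\bigr)(z_{(j,n)})$ there. Since $u(z_{(j,n)})=0$ for $j\le l+m-1$, this value is $-\sum_{y\sim z_{(j,n)}}w_{z_{(j,n)}y}u(y)$, and Cauchy–Schwarz together with $\sum_{y\sim z_{(j,n)}}w_{z_{(j,n)}y}^2\le\pi_{z_{(j,n)}}^2\le\pi_{l+m-1,B_n}^2$ gives
\[
  \alpha^2=\sum_{j=l}^{l+m-1}\Bigl(\,\sum_{y\sim z_{(j,n)}}w_{z_{(j,n)}y}u(y)\Bigr)^2\le\pi_{l+m-1,B_n}^2\sum_{j=l}^{l+m-1}\sum_{y\sim z_{(j,n)}}u(y)^2 .
\]

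The remaining task is to show that $\sum_{j=l}^{l+m-1}\sum_{y\sim z_{(j,n)}}u(y)^2\le\bigl\|\phi_{l+m,1}^{(n)}\bigr\|^2_{\ell^2(\Dn)}$, since Lemma \ref{lem:Loc:MassDn} (with $k=l+m$) bounds the latter by $n^{-\eps_1/2}$; inserting this into the display yields $\alpha\le n^{-\eps_1/4}\pi_{l+m-1,B_n}$ as desired. For this I would use Remark \ref{rem:TechnicalStuff}: \Pas for $n$ large enough the sites $z_{(1,n)},\dots,z_{(l+m,n)}$ all lie in $\In$, and $\In$ is both sparse and $2$-sparse. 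Sparseness forces every neighbour $y$ of $z_{(j,n)}$ that carries nonzero $u$-mass to lie in $\Dn$: it lies in $B_n$ because $\supp u\subseteq B_n$, and it cannot lie in $\In$ since $z_{(j,n)}\in\In$ and two neighbours cannot both be in $\In$; in particular no such $y$ equals $z_{(l+m,n)}$, which carries almost all the mass of $u$. The $2$-sparseness makes the neighbourhoods of $z_{(l,n)},\dots,z_{(l+m-1,n)}$ pairwise disjoint, so the double sum is dominated by $\sum_{x\in\Dn}u(x)^2=\|\phi_{l+m,1}^{(n)}\|^2_{\ell^2(\Dn)}$.

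The one genuinely delicate step is this last paragraph, i.e.\ the combinatorial bookkeeping: one must be sure that the $m$ neighbourhoods involved do not overlap and, above all, that none of them contains the concentration site $z_{(l+m,n)}$, for otherwise the residual would fail to be small; both facts are precisely what the sparseness statements collected in Remark \ref{rem:TechnicalStuff} guarantee for $n$ large. The algebraic identity for the residual, the Cauchy–Schwarz estimate, and the closing application of Lemma \ref{lem:BauerFike} are all routine.
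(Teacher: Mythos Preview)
Your proposal is correct and follows essentially the same route as the paper: apply Lemma~\ref{lem:BauerFike} with $A=-\Lw_{(l,n)}$, $u=\phi_{l+m,1}^{(n)}$, identify the residual as supported on $\{z_{(l,n)},\dots,z_{(l+m-1,n)}\}$, and bound it via the sparseness properties of $\In$ from Remark~\ref{rem:TechnicalStuff} together with Lemma~\ref{lem:Loc:MassDn}. The only cosmetic difference is that the paper bounds $\bigl(\sum_{x\sim z}w_{xz}\phi(x)\bigr)^2\le\pi_z^2\max_{x\sim z}\phi(x)^2$ using the nonnegativity of $\phi_{l+m,1}^{(n)}$, whereas you use Cauchy--Schwarz to get $\pi_z^2\sum_{x\sim z}\phi(x)^2$; both lead to the same final estimate after invoking disjointness of the neighbourhoods.
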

\begin{proof}
   We aim to apply Lemma \ref{lem:BauerFike} with the operator $A = -\Lw_{(l,n)}$, the Hilbert space $H = \ell^2 (\Bln)$, the value $\mu = \mu_{l+m,1}$ and the vector $u = \phi_{l+m,1}^{(n)}$.
   First, we note that $\| \phi_{l+m,1}^{(n)} \|_{\ell^2 (\Bln)} = 1$.
   Next, we recall that $\phi_{l+m,1}^{(n)}$ is an eigenvector of the operator $-\Lw_{(l+m,n)}$ to the eigenvalue $\mu_{l+m,1}^{(n)}$ and therefore
  \begin{align*}
    \left\| \Lw_{(l,n)} \phi_{l+m,1}^{(n)} \mspace{-5mu}+ \mu_{l+m,1}^{(n)} \phi_{l+m,1}^{(n)}\right\|^2_{\ell^2 (\Bln)}\mspace{-3mu}
    &= \mspace{-13mu}\sum_{z\in\Bln\backslash\mathscr{B}^{(n)}_{l+m}}\mspace{-11mu} \Bigl(\Lw_{(l,n)} \phi_{l+m,1}^{(n)} (z)  + \mu_{l+m,1}^{(n)} \phi_{l+m,1}^{(n)} (z)\Bigr)^2\, ,
  \end{align*}
  where all other summands vanish.
  Note that $\Bln\backslash\mathscr{B}^{(n)}_{l+m} = \left\{ z_{(l,n)}, \ldots z_{(l+m-1,n)} \right\}$ and by definition we have $\phi_{l+m,1}^{(n)} (z)=0$ for all $z\in\left\{ z_{(l,n)}, \ldots z_{(l+m-1,n)} \right\}$.
  It follows that for all $z\in\left\{ z_{(l,n)}, \ldots, z_{(l+m-1,n)} \right\}$ we have
  \begin{align*}
    \Lw_{(l,n)} \phi_{l+m,1}^{(n)} (z) \;=\; \sum_{x\colon x\sim z} w_{xz} \Bigl( \phi_{l+m,1}^{(n)} (x) - \phi_{l+m,1}^{(n)} (z)\Bigr)
    \;=\; \sum_{x\colon x\sim z} w_{xz} \phi_{l+m,1}^{(n)} (x)\, .
  \end{align*}
  Since $\pi_{l+m-1,B_n}\geq \pi_{l+m-2,B_n} \geq \ldots \geq \pi_{l,B_n}$, it follows that
  \begin{align*}
  \left\| \Lw_{(l,n)} \phi_{l+m,1}^{(n)} + \mu_{l+m,1}^{(n)} \phi_{l+m,1}^{(n)}\right\|^2_{\ell^2 (\Bln)}
  &\leq \pi^2_{l+m-1,B_n}\sum_{z\in\Bln\backslash\mathscr{B}^{(n)}_{l+m}} \max_{x\colon x\sim z} \Bigl(  \phi_{l+m,1}^{(n)} (x)\Bigr)^2\, .
  \end{align*}
  Since by virtue of Remark \ref{rem:TechnicalStuff} the sites $z_{(1,n)}, \ldots, z_{(l+m-1,n)}$ are in $\In$ and are neither neighbors nor do they share a common neighbor \Pas for $n$ large enough, it follows that \Pas for $n$ large enough
  \begin{align*}
    \sum_{z\in\Bln\backslash\mathscr{B}^{(n)}_{l+m}} \max_{x\colon x\sim z} \Bigl(  \phi_{l+m,1}^{(n)} (x)\Bigr)^2 \leq \sum_{x\in \Dn} \left( \phi_{l+m,1}^{(n)} (x)\right)^2 \leq n^{-\eps_1/2}\, ,
  \end{align*}
  where the last bound is due to Lemma \ref{lem:Loc:MassDn}.
  The claim follows by virtue of Lemma \ref{lem:BauerFike}.
\end{proof}

Here comes the second application of Lemma \ref{lem:BauerFike}.
\begin{lem}
  \label{lem:MuClose2}
  Let $\varepsilon>0$, $l,m\in\N$.
  If Assumption \ref{ass:main} holds and \Pas for $n$ large enough
  \begin{align}
    \phi_{l,j}^{(n)} \left(z_{(l+j-1,n)}\right) \geq \sqrt{1 - n^{-\varepsilon/4}}\qquad \text{for all } 1\leq j\leq m\, ,
    \label{equ:lemMuClose2:Cond:EigVec}
  \end{align}
  then \Pas for $n$ large enough there exists $j\in\{1,\ldots, |B_n|-l-m+1\}$ such that
  \begin{align}
  \label{equ:MuClose2}
    \left|\mu_{l,m+1}^{(n)} - \mu_{l+m,j}^{(n)} \right| \leq \pi_{l+m-1, B_n}\, \sqrt{\frac{mn^{-\varepsilon /4}}{1 - mn^{-\varepsilon /4}}}\, .
  \end{align}
\end{lem}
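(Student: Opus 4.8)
The plan is to apply the Bauer-Fike theorem, Lemma \ref{lem:BauerFike}, with $A = -\Lw_{(l+m,n)}$ on the Hilbert space $H = \ell^2\bigl(\mathscr{B}^{(n)}_{l+m}\bigr)$, the value $\mu = \mu_{l,m+1}^{(n)}$, and a unit vector $u$ obtained from the eigenvector $\phi := \phi_{l,m+1}^{(n)}$ of $-\Lw_{(l,n)}$ by deleting its mass on the sites that are removed when passing from $\Bln$ to $\mathscr{B}^{(n)}_{l+m}$ and then renormalizing. Write $\mu := \mu_{l,m+1}^{(n)}$ and $R := \{z_{(l,n)},\ldots,z_{(l+m-1,n)}\} = \Bln\backslash\mathscr{B}^{(n)}_{l+m}$. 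Since $j< m+1$ for every $1\le j\le m$, Lemma \ref{lem:orth} applied at $z = z_{(l+j-1,n)}$, together with hypothesis \eqref{equ:lemMuClose2:Cond:EigVec}, gives $\bigl|\phi(z_{(l+j-1,n)})\bigr| \le n^{-\varepsilon/8}$, whence $\sum_{z\in R}\phi(z)^2 \le mn^{-\varepsilon/4}$. Consequently the truncated vector $\tilde\phi := \phi\,\mathds{1}_{\mathscr{B}^{(n)}_{l+m}}$ satisfies $\|\tilde\phi\|^2 = 1 - \sum_{z\in R}\phi(z)^2 \ge 1 - mn^{-\varepsilon/4} > 0$ for $n$ large enough, so $u := \tilde\phi/\|\tilde\phi\|$ is well defined.

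The central step is to bound the defect $\bigl\|(-\Lw_{(l+m,n)})\tilde\phi - \mu\tilde\phi\bigr\|_H$. Using that $\tilde\phi$ coincides with $\phi$ on $\mathscr{B}^{(n)}_{l+m}$ and vanishes on $R$, and that $(-\Lw_{(l,n)})\phi = \mu\phi$ on $\Bln\supset\mathscr{B}^{(n)}_{l+m}$, one checks directly that for every $x\in\mathscr{B}^{(n)}_{l+m}$
\begin{align*}
  \bigl((-\Lw_{(l+m,n)})\tilde\phi - \mu\tilde\phi\bigr)(x) = \sum_{y\colon y\sim x,\, y\in R} w_{xy}\,\phi(y)\, ,
\end{align*}
that is, the only discrepancy between $-\Lw_{(l+m,n)}$ acting on $\tilde\phi$ and $-\Lw_{(l,n)}$ acting on $\phi$ comes from the edges joining $\mathscr{B}^{(n)}_{l+m}$ to $R$ (and the defect vanishes off $\mathscr{B}^{(n)}_{l+m}$, so its $\ell^2\bigl(\mathscr{B}^{(n)}_{l+m}\bigr)$-norm equals its full $\ell^2$-norm). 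By Remark \ref{rem:TechnicalStuff}, \Pas for $n$ large enough the sites in $R\subset\In$ are pairwise non-adjacent and have no common neighbor, so for each $x$ the sum above has at most one nonzero term; squaring, summing over $x$, and estimating $\sum_{x\colon x\sim y} w_{xy}^2 \le \pi_y^2 \le \pi_{l+m-1,B_n}^2$ for $y\in R$ (since $\pi_{z_{(l+j-1,n)}} = \pi_{l+j-1,B_n}\le \pi_{l+m-1,B_n}$) yields
\begin{align*}
  \bigl\|(-\Lw_{(l+m,n)})\tilde\phi - \mu\tilde\phi\bigr\|_H^2 \le \pi_{l+m-1,B_n}^2\sum_{y\in R}\phi(y)^2 \le \pi_{l+m-1,B_n}^2\, mn^{-\varepsilon/4}\, .
\end{align*}
Dividing by $\|\tilde\phi\|\ge\sqrt{1-mn^{-\varepsilon/4}}$ gives $\|Au-\mu u\|_H\le\alpha$ with $\alpha := \pi_{l+m-1,B_n}\sqrt{mn^{-\varepsilon/4}/(1-mn^{-\varepsilon/4})}$.

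Finally, $-\Lw_{(l+m,n)}$ restricted to $\ell^2\bigl(\mathscr{B}^{(n)}_{l+m}\bigr)$ is self-adjoint and finite-dimensional, hence compact, so Lemma \ref{lem:BauerFike} provides an eigenvalue $\mu_i$ of this operator with $|\mu_i - \mu_{l,m+1}^{(n)}|\le\alpha$. Since the eigenvalues of $-\Lw_{(l+m,n)}$ on $\ell^2\bigl(\mathscr{B}^{(n)}_{l+m}\bigr)$ are exactly $\mu_{l+m,1}^{(n)}\le\ldots\le\mu_{l+m,|B_n|-l-m+1}^{(n)}$, there is $j\in\{1,\ldots,|B_n|-l-m+1\}$ with $|\mu_{l+m,j}^{(n)}-\mu_{l,m+1}^{(n)}|\le\alpha$, which is \eqref{equ:MuClose2}. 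I expect the main (though not deep) obstacle to be the bookkeeping in the defect identity above; the sparsity input from Remark \ref{rem:TechnicalStuff} and the orthogonality bound from Lemma \ref{lem:orth} are precisely what reduce the two subsequent estimates to elementary computations, and the $\beta$-part of Lemma \ref{lem:BauerFike} is not needed here.
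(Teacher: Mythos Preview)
Your proof is correct and follows essentially the same approach as the paper: apply Lemma \ref{lem:BauerFike} with $A=-\Lw_{(l+m,n)}$, $H=\ell^2\bigl(\mathscr{B}^{(n)}_{l+m}\bigr)$, $\mu=\mu_{l,m+1}^{(n)}$, and $u$ the normalized restriction of $\phi_{l,m+1}^{(n)}$ to $\mathscr{B}^{(n)}_{l+m}$; bound the mass on the deleted sites via Lemma \ref{lem:orth}; and control the defect using the sparsity of $\In$ from Remark \ref{rem:TechnicalStuff}. The only cosmetic difference is that the paper factors the defect as $\max_{z\in R}\phi(z)^2\cdot\sum_{z\in R}\pi_z^2$ while you write it as $\pi_{l+m-1,B_n}^2\cdot\sum_{y\in R}\phi(y)^2$, but both yield the same bound $mn^{-\varepsilon/4}\pi_{l+m-1,B_n}^2$.
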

\begin{proof}
  We aim to apply Lemma \ref{lem:BauerFike} with the operator $A = -\Lw_{(l+m,n)}$, the Hilbert space $H = \ell^2 (\mathscr{B}^{(n)}_{l+m})$, the value $\mu = \mu_{l,m+1}^{(n)}$ and the vector $u = \phi_{l,m+1}^{(n)}/\|\phi_{l,m+1}^{(n)} \|_{\ell^2 (\mathscr{B}^{(n)}_{l+m})}$.
  First, we note that by definition $\| u \|_{\ell^2 (\mathscr{B}^{(n)}_{l+m})} = 1$ and \Pas for $n$ large enough
  \begin{align}
    \|\phi_{l,m+1}^{(n)} \|^2_{\ell^2 (\mathscr{B}^{(n)}_{l+m})} = 1 - \mspace{-20mu}\sum_{z\in \Bln\backslash \mathscr{B}^{(n)}_{l+m}} \left( \phi_{l,m+1}^{(n)} (z) \right)^2 \geq 1 - mn^{-\varepsilon/4}
    \label{equ:EstMassPhiM1}
  \end{align}
  by virtue of Condition \eqref{equ:lemMuClose2:Cond:EigVec} and Lemma \ref{lem:orth}.
  
  Next, as we show in detail in \eqref{equ:App1}, we can estimate
  \begin{align}
    &\left\| \Lw_{(l+m,n)} \phi_{l,m+1}^{(n)} + \mu_{l,m+1}^{(n)} \phi_{l,m+1}^{(n)}\right\|^2_{\ell^2 (\mathscr{B}^{(n)}_{l+m})}\nonumber\\
    &\mspace{200mu}\leq \max_{z\in \Bln\backslash\mathscr{B}^{(n)}_{l+m}} \left( \phi_{l,m+1}^{(n)}(z)\right)^2 \sum_{x\in B_n}\mspace{-3mu} \Biggl( \sum_{z\colon z\sim x\atop z\in \Bln\backslash\mathscr{B}^{(n)}_{l+m}} \mspace{-15mu}w_{xz} \Biggr)^2\, .
    \label{equ:lem:proof:MuClose2}
  \end{align}
  Since by virtue of Remark \ref{rem:TechnicalStuff} we have \Pas for $n$ large enough
  \begin{align*}
   \Bln\backslash\mathscr{B}^{(n)}_{l+m} = \left\{ z_{l,n}, \ldots, z_{l+m-1,n} \right\} \subset \In
  \end{align*}
  and $\In$ is 1-sparse, it follows that on the RHS of \eqref{equ:lem:proof:MuClose2} for each $x\in B_n$ the sum over $\left\{ z\in \Bln\backslash\mathscr{B}^{(n)}_{l+m}\colon z\sim x \right\}$ contains at most one summand.
  Therefore \Pas for $n$ large enough we can pull the square into the inner sum.
  Then we rearrange both sums and use that for all $z$ we have $\sum_{x\colon x\sim z} w_{xz}^2 \leq \pi_z^2$ to infer that \Pas for $n$ large enough
  \begin{align*}
   \left\| \Lw_{(l+m,n)} \phi_{l,m+1}^{(n)} + \mu_{l,m+1}^{(n)} \phi_{l,m+1}^{(n)}\right\|^2_{\ell^2 (\mathscr{B}^{(n)}_{l+m})}&\leq \max_{z\in \Bln\backslash\mathscr{B}^{(n)}_{l+m}} \left( \phi_{l,m+1}^{(n)}(z)\right)^2 \sum_{z\in \Bln\backslash\mathscr{B}^{(n)}_{l+m}} \pi_{z}^2\, .
  \end{align*}
  By virtue of Lemma \ref{lem:orth} and Assumption \eqref{equ:lemMuClose2:Cond:EigVec}, for all $z\in \{ z_{(l,n)},\ldots, z_{(l+m-1,n)} \}$ we know that \Pas for $n$ large enough
  \begin{align*}
    \left| \phi_{l,m+1}^{(n)}(z)\right|  \leq n^{-\varepsilon/8}\, .
  \end{align*}
  Furthermore, $\sum_{z\in \Bln\backslash\mathscr{B}^{(n)}_{l+m}} \pi_{z}^2\leq m \pi_{l+m-1,B_n}^2$.
  It follows that \Pas for $n$ large enough
  \begin{align*}
  \left\| \Lw_{(l+m,n)} \phi_{l,m+1}^{(n)} - \mu_{l,m+1}^{(n)} \phi_{l,m+1}^{(n)}\right\|^2_{\ell^2 (\mathscr{B}^{(n)}_{l+m})} \leq mn^{-\varepsilon/4} \pi_{l+m-1,B_n}^2\, .
  \end{align*}
  
  Together with \eqref{equ:EstMassPhiM1} it follows that \Pas for $n$ large enough
  \begin{align}
  \left\| \Lw_{(l+m,n)} u - \mu_{l,m+1}^{(n)} u\right\|^2_{\ell^2 (\mathscr{B}^{(n)}_{l+m})} \leq \frac{mn^{-\varepsilon/4}}{1 - mn^{-\varepsilon/4}}\, \pi_{l+m-1,B_n}^2\, .
  \label{equ:CondBauerFike2}
  \end{align}
  and therefore the claim follows by virtue of Lemma \ref{lem:BauerFike}.
\end{proof}

Both Lemmas \ref{lem:MuClose1} and \ref{lem:MuClose2} imply the following lemma.

\begin{lem}
  \label{lem:InductiveStep}
  Let $\eps\in(0,\eps_1)$ and $l,m\in\N$.
  If Assumption \ref{ass:main} holds and \Pas for $n$ large enough
  \begin{align}
    \phi_{l,j}^{(n)} \left(z_{(l+j-1,n)}\right) \geq \sqrt{1 - n^{-\eps/4}}\qquad \text{for all } 1\leq j\leq m \, ,
    \label{equ:lem2:Cond:EigVec}
  \end{align}
  then
  \begin{align}
    \label{equ:lem2:EigVal}
    \mu_{l,m+1}^{(n)} \geq \left( 1 - (2+\sqrt{m})n^{-\eps/ 8} \right)\pi_{l+m, B_n}\, .    
  \end{align}
\end{lem}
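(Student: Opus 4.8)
The plan is to combine the two Bauer--Fike estimates from Lemmas~\ref{lem:MuClose1} and~\ref{lem:MuClose2} with the trivial upper bound from Lemma~\ref{lem:TrivialUpperBound} and the extreme value statement Lemma~\ref{lem:QuotOrder} in order to pin down $\mu_{l,m+1}^{(n)}$ from below. First I would apply Lemma~\ref{lem:MuClose1} to the operator $-\Lw_{(l,n)}$ to obtain an index $i$ with $|\mu_{l,i}^{(n)} - \mu_{l+m,1}^{(n)}| \leq n^{-\eps_1/4}\pi_{l+m-1,B_n}$, and then apply Lemma~\ref{lem:MuClose2} (whose hypothesis~\eqref{equ:lemMuClose2:Cond:EigVec} is exactly our assumption~\eqref{equ:lem2:Cond:EigVec}) to obtain an index $j$ with $|\mu_{l,m+1}^{(n)} - \mu_{l+m,j}^{(n)}| \leq \pi_{l+m-1,B_n}\sqrt{mn^{-\eps/4}/(1-mn^{-\eps/4})}$. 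The point is that the spectrum of $-\Lw_{(l,n)}$ near the bottom consists exactly of the first $m$ auxiliary eigenvalues $\mu_{l,1}^{(n)},\dots,\mu_{l,m}^{(n)}$ plus $\mu_{l,m+1}^{(n)}$, and by Lemma~\ref{lem:MuClose1} each of $\mu_{l+m,1}^{(n)}$ is within $n^{-\eps_1/4}\pi_{l+m-1,B_n}$ of \emph{some} $\mu_{l,i}^{(n)}$; one checks via the ordering and the trivial upper bounds that $\mu_{l+m,1}^{(n)} \leq \pi_{l+m,B_n}$, so it cannot be matched to $\mu_{l,1}^{(n)},\dots,\mu_{l,m}^{(n)}$ (those are much smaller — comparable to $\pi_{l,B_n},\dots,\pi_{l+m-1,B_n}$ — while the gaps between the order statistics are polynomially large by Lemma~\ref{lem:QuotOrder}), and hence it must be matched to $i = m+1$, i.e.\ $\mu_{l,m+1}^{(n)} \geq \mu_{l+m,1}^{(n)} - n^{-\eps_1/4}\pi_{l+m-1,B_n}$.

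Second, I would bound $\mu_{l+m,1}^{(n)}$ from below. By Perron--Frobenius (Remark~\ref{rem:PF}) $\phi_{l+m,1}^{(n)}$ is the principal eigenvector of $-\Lw_{(l+m,n)}$, and Remark~\ref{rem:TechnicalStuff} together with Lemma~\ref{lem:MuPhi1} (applied with the box $\mathscr{B}^{(n)}_{l+m}$ in the role of $B_n$, whose smallest speed measure is $\pi_{l+m,B_n}$) gives $\phi_{l+m,1}^{(n)}(z_{(l+m,n)}) \geq \sqrt{1-n^{-\eps_1/4}}$ and the eigenvalue bound $\mu_{l+m,1}^{(n)} \geq (1 - 2n^{-\eps_1/8})\pi_{l+m,B_n}$, exactly as in~\eqref{equ:lem1:EigVal:LowerBound}. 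Strictly, Lemma~\ref{lem:MuPhi1} is stated for the full box $B_n$; I would note that its proof goes through verbatim after deleting the $l+m-1$ sites $z_{(1,n)},\dots,z_{(l+m-1,n)}$, since by Remark~\ref{rem:TechnicalStuff} these lie in $\In$ and are sparse, so Lemmas~\ref{lem:uniquemax}, \ref{lem:Loc:MassDn} and~\ref{lem:QuotOrder} all apply to the auxiliary problem on $\mathscr{B}^{(n)}_{l+m}$.

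Third, I would chain the inequalities. Combining the three estimates above,
\[
\mu_{l,m+1}^{(n)} \;\geq\; \mu_{l+m,1}^{(n)} - n^{-\eps_1/4}\pi_{l+m-1,B_n}
\;\geq\; (1 - 2n^{-\eps_1/8})\pi_{l+m,B_n} - n^{-\eps_1/4}\pi_{l+m-1,B_n},
\]
and then, using $\pi_{l+m-1,B_n}\leq\pi_{l+m,B_n}$, absorbing $n^{-\eps_1/4}$ into $n^{-\eps/8}$ for $\eps<\eps_1$ and $n$ large, this already gives something of the form $(1 - C n^{-\eps/8})\pi_{l+m,B_n}$. To get the precise constant $2+\sqrt{m}$ in~\eqref{equ:lem2:EigVal} I would instead route through Lemma~\ref{lem:MuClose2}: from $|\mu_{l,m+1}^{(n)} - \mu_{l+m,j}^{(n)}|\leq\pi_{l+m-1,B_n}\sqrt{mn^{-\eps/4}/(1-mn^{-\eps/4})}\leq \sqrt{m}\,n^{-\eps/8}\pi_{l+m,B_n}$ for $n$ large, together with $\mu_{l+m,j}^{(n)}\geq\mu_{l+m,1}^{(n)}\geq(1-2n^{-\eps/8})\pi_{l+m,B_n}$ (here one uses monotonicity in $\eps$ of the bound in~\eqref{equ:lem1:EigVal:LowerBound}, valid for $\eps<\eps_1$), one gets $\mu_{l,m+1}^{(n)} \geq (1 - 2n^{-\eps/8} - \sqrt{m}\,n^{-\eps/8})\pi_{l+m,B_n}$, which is~\eqref{equ:lem2:EigVal}.

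The main obstacle is the index-matching argument in the first step: Bauer--Fike only produces \emph{some} eigenvalue of $-\Lw_{(l,n)}$ close to $\mu_{l+m,1}^{(n)}$, and one must rule out that it is one of the lower eigenvalues $\mu_{l,1}^{(n)},\dots,\mu_{l,m}^{(n)}$. This requires a \emph{separation} estimate: the $m$ values $\mu_{l,1}^{(n)},\dots,\mu_{l,m}^{(n)}$ are sandwiched, via Lemma~\ref{lem:TrivialUpperBound} and an analogue of Lemma~\ref{lem:MuPhi1} for each deleted box, between constant multiples of $\pi_{l,B_n},\dots,\pi_{l+m-1,B_n}$, all of which are separated from $\pi_{l+m,B_n}$ (hence from $\mu_{l+m,1}^{(n)}$) by polynomial factors by repeated use of Lemma~\ref{lem:QuotOrder}, whereas the Bauer--Fike error $n^{-\eps_1/4}\pi_{l+m-1,B_n}$ is $o(\pi_{l+m,B_n} - \pi_{l+m-1,B_n})$. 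Making this quantitative and uniform in the (fixed, finite) $l,m$ is the only genuinely delicate point; everything else is bookkeeping with the two preceding lemmas.
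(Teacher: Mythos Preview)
Your proposal is correct, and your ``third step'' route is in fact slightly cleaner than the paper's own argument. The paper proceeds by a case split on the sign of $\mu_{l,m+1}^{(n)} - \mu_{l+m,1}^{(n)}$: if $\mu_{l,m+1}^{(n)} \leq \mu_{l+m,1}^{(n)}$, then in Lemma~\ref{lem:MuClose2} the closest eigenvalue of $-\Lw_{(l+m,n)}$ is automatically $j=1$; if instead $\mu_{l,m+1}^{(n)} > \mu_{l+m,1}^{(n)}$, the paper invokes Lemma~\ref{lem:MuClose1} and argues (essentially as you sketch in your first step) that the matching index must be $i=m+1$, by combining the upper bound $\mu_{l,i}^{(n)} \leq \pi_{l+i-1,B_n}$ with the lower bound $\mu_{l+m,1}^{(n)} \geq (1-2n^{-\eps_1/8})\pi_{l+m,B_n}$ from \eqref{equ:lem1:EigVal:LowerBound} and then appealing to Lemma~\ref{lem:QuotOrder}. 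Either way the paper first obtains the two-sided estimate $|\mu_{l,m+1}^{(n)} - \mu_{l+m,1}^{(n)}| \leq \sqrt{m}\,n^{-\eps/8}\pi_{l+m,B_n}$ and only then extracts the lower bound.

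Your observation that $\mu_{l+m,j}^{(n)} \geq \mu_{l+m,1}^{(n)}$ for \emph{every} $j$ makes the index-matching in Lemma~\ref{lem:MuClose2} unnecessary for the present purpose: whatever $j$ Bauer--Fike hands you, the inequality $\mu_{l,m+1}^{(n)} \geq \mu_{l+m,1}^{(n)} - \sqrt{m}\,n^{-\eps/8}\pi_{l+m,B_n}$ follows at once, and then \eqref{equ:lem1:EigVal:LowerBound} finishes. This dispenses with the case split entirely, at the price of losing the two-sided bound \eqref{equ:MuClose} (which the paper records but does not use again). So what you flag as ``the main obstacle'' --- the index-matching --- is in fact avoidable for this lemma.

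Two minor corrections to your write-up. First, Lemma~\ref{lem:MuPhi1} is already stated for arbitrary $k\in\N$, so it applies directly with $k=l+m$ to give $\mu_{l+m,1}^{(n)}\geq(1-2n^{-\eps_1/8})\pi_{l+m,B_n}$; there is no need to re-run its proof on the smaller box. Second, in your first-step index-matching, it is the \emph{lower} bound on $\mu_{l+m,1}^{(n)}$ (not its upper bound $\leq\pi_{l+m,B_n}$) that, together with the trivial upper bounds $\mu_{l,i}^{(n)}\leq\pi_{l+i-1,B_n}$ and Lemma~\ref{lem:QuotOrder}, rules out $i\leq m$.
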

\begin{proof}  
  Let us first assume that $\mu_{l,m+1}^{(n)} \leq \mu_{l+m,1}^{(n)}$.
  Due to Assumption \eqref{equ:lem2:Cond:EigVec} we can apply Lemma \ref{lem:MuClose2}.
  Because of the ordering $\mu_{l+m,1}^{(n)} \leq \mu_{l+m,2}^{(n)} \leq \ldots, $ it follows that Relation \eqref{equ:MuClose2} holds with $j=1$ and $\varepsilon=\eps$.
  On the other hand, if $\mu_{l,m+1}^{(n)} > \mu_{l+m,1}^{(n)}$, then \eqref{equ:MuClose1} holds with an index $i\leq m+1$.
  Let us now argue why \eqref{equ:MuClose1} holds with exactly $i= m+1$ \Pas for $n$ large enough.
  We assume the contrary, i.e., that $i\leq m$ infinitely often as $n$ tends to infinity.
  Then \eqref{equ:MuClose1} together with \eqref{equ:lem1:EigVal:LowerBound} implies that 
  \begin{align*}
    \mu_{l,i}^{(n)} \geq \mu_{l+m,1}^{(n)} - n^{-\eps_1/4} \pi_{l+m-1,B_n} \geq \left(1 - 2n^{-\eps_1/8} - n^{-\eps_1/4}\right) \pi_{l+m,B_n}
  \end{align*}
  Note that \eqref{equ:lem1:EigVal:UpperBound} implies that $\mu_{l,i}^{(n)} \leq \pi_{l+i-1,B_n}$, which we assumed to be less than or equal to $\pi_{l+m-1,B_n}$ infinitely often as $n$ tends to infinity.
  Thus
  \begin{align*}
    \frac{\pi_{l+m-1,B_n}}{\pi_{l+m,B_n}} \geq 1 - 3n^{-\eps_1/8}
  \end{align*}
  infinitely often as $n$ tends to infinity.
  This is a contradiction to Lemma \ref{lem:QuotOrder}.
  
  Thus, since $\eps<\eps_1$, it follows regardless of whether $\mu_{l,m+1}^{(n)} \leq \mu_{l+m,1}^{(n)}$ or $\mu_{l,m+1}^{(n)} > \mu_{l+m,1}^{(n)}$ that \Pas for $n$ large enough
  \begin{align}
  \label{equ:MuClose}
    \left|\mu_{l,m+1}^{(n)} - \mu_{l+m,1}^{(n)} \right| \leq \sqrt{\frac{mn^{-\eps/4}}{1 - mn^{-\eps/4}}}\, \pi_{l+m-1,B_n} \leq \sqrt{m} n^{-\eps/8}\cdot \pi_{l+m, B_n}\, .
  \end{align}
  Therefore \Pas for $n$ large enough $\mu_{l,m+1}^{(n)}$ is bounded from below by
  \begin{align}
  \label{equ:MuCloseLower}
      \mu_{l,m+1}^{(n)} &\;\geq\; \mu_{l+m,1}^{(n)} - \sqrt{m} n^{-\eps/8}\cdot \pi_{l+m, B_n}\;
      \stackrel{\text{\eqref{equ:lem1:EigVal:LowerBound}}}\geq\; \left( 1 - (2+\sqrt{m})n^{-\eps/8}\right)\pi_{l+m, B_n}\, .
  \end{align}
\end{proof}

Now we have the ingredients to prove the main theorem by induction.

\section{Proof of the main theorem}
\label{sec:ProofMainThm}
By virtue of Lemma \ref{lem:TrivialUpperBound}, we already know that
\begin{align*}
  \lambda_k^{(n)} \leq \pi_{k,B_n}\qquad \text{for all }k\in\N\, .
\end{align*}
In what follows, we further prove \eqref{equ:thm:EigVec} and that \Pas for $n$ large enough
\begin{align*}
 \lambda_k^{(n)} \geq \left( 1 - n^{-\eps/8}\right)\pi_{k,B_n}\qquad\text{for all }\eps<\eps_1\, .
\end{align*}

We prove the claim by induction over $k$.
\paragraph{Base case: $\boldsymbol{k=1}$.}
\Pas for $n$ large enough we have
  \begin{align}
  \psi_1^{(n)} \left(z_{(1,n)}\right)^2 \geq  1 - n^{-\eps_1/4}\, ,
  \label{equ:Psi1zn}
  \end{align}
by virtue of \cite[Theorem 1.8]{Flegel2016} and
  \begin{align}
    \lambda_1^{(n)} \geq \left( 1 - 2n^{-\eps_1/8}\right)\pi_{1,B_n} > \left( 1 - n^{-\eps/8}\right)\pi_{1,B_n}\qquad\text{for all }\eps<\eps_1
    \label{equ:PrincipalEigValMinPi}
  \end{align}
by virtue of \cite[Equation (5.30)]{Flegel2016}.

\paragraph{Inductive step: $\boldsymbol{(k-1) \rightsquigarrow k}$.}
Suppose that the claims \eqref{equ:thm:EigVal} and \eqref{equ:thm:EigVec} hold for some $k-1\in\N$.
We now show that this implies that the claims also hold for $k$ instead of $k-1$.

For \eqref{equ:thm:EigVal} this already follows by Lemma \ref{lem:InductiveStep} with $l=1$ and $m=k-1$.
Note that here Condition \eqref{equ:lem2:Cond:EigVec} holds for all $\eps<\eps_1$ and therefore \eqref{equ:lem2:EigVal} holds even without the multiplicative constants.
For \eqref{equ:thm:EigVec} we apply the second part of Lemma \ref{lem:BauerFike}:
Let $0<\delta<\eps_1/16$ and
\begin{align}
  \beta_k^{(n)} = 2\sqrt{k-1}\, n^{-\delta} \pi_{k,B_n}\, .
\end{align}
Since $\pi_{k-1,B_n} \leq \pi_{k,B_n}$, it follows that $\beta_k^{(n)} > \alpha_k^{(n)}$ with
\begin{align*}
  \alpha_k^{(n)} := \sqrt{k-1}\,  n^{-\eps_1/8} \pi_{k-1,B_n} \, .
\end{align*}
Therefore Lemma \ref{lem:BauerFike} and \eqref{equ:CondBauerFike2} with $l=1$ and $m=k-1$ imply that there exists a function $\overline u\colon \Z^d \to \R$ such that
\begin{align}
  \left\| \psi_k^{(n)} - \overline u\right\|_{\ell^2 (B_n)} \leq \frac{2\sqrt{k-1}\, n^{-\eps_1/8} \pi_{k-1,B_n}}{\beta_k^{(n)}}
\end{align}
where $\overline u$ is a linear combination of the eigenvectors $\left\{ \phi_{k,j} \right\}_{j\geq 1}$ corresponding to the eigenvalues from the interval $\left[\lambda_k^{(n)}-\beta_k^{(n)}, \lambda_k^{(n)}+\beta_k^{(n)}\right]$ of the operator $-\Lw_{(k,n)}$.
We now show that \Pas for $n$ large enough $\overline u = \phi_{k,1}^{(n)}$, i.e., that \Pas for $n$ large enough
\begin{align}
  \text{spec}\,\Lw_{(k,n)} \cap \left[\lambda_k^{(n)}-\beta_k^{(n)}, \lambda_k^{(n)}+\beta_k^{(n)}\right] = \left\{ \mu_{k,1}^{(n)} \right\}\, .
  \label{equ:SingleEigVal}
\end{align}
It suffices to show that \Pas for $n$ large enough $\mu_{k,2}^{(n)} > \lambda_k^{(n)}+\beta_k^{(n)}$.
We note that Lemma \ref{lem:TrivialUpperBound} implies that
\begin{align}
  \lambda_{k}^{(n)} +\beta_k^{(n)} \leq \left( 1 + 2\sqrt{k-1}\, n^{-\delta} \right)\pi_{k,B_n}\, .
\end{align}
By virtue of Lemma \ref{lem:QuotOrder} we have \Pas for $n$ large enough $\frac{\pi_{k,B_n}}{\pi_{k+1, B_n}}< 1 - 2\sqrt{k-1}\, n^{-\delta}$, whence it follows that \Pas for $n$ large enough
\begin{align*}
  \lambda_{k}^{(n)} +\beta_k^{(n)} < \left( 1 - 4(k-1) n^{-2\delta} \right)\pi_{k+1, B_n} \leq \mu_{k,2}^{(n)}\, ,
\end{align*}
where the last inequality follows since by the inductive assumption the relation \eqref{equ:lem2:Cond:EigVec} holds for all $\eps<\eps_1$ and therefore \eqref{equ:lem2:EigVal} holds for all $\eps<\eps_1$ with $l=k$ and $m=1$.
Therefore \eqref{equ:SingleEigVal} is true.

It follows that for any $0<\delta<\eps_1/16$ we have \Pas for $n$ large enough
\begin{align*}
  \left| \psi_k^{(n)}(z_{(k,n)}) - \phi_{k,1}^{(n)}(z_{(k,n)})\right| \;\leq\; \frac{n^{\delta-\eps_1/8} \pi_{k-1,B_n}}{\pi_{k,B_n}} \;<\; n^{\delta-\eps_1/8}\, .
\end{align*}
By virtue of Lemma \ref{lem:MuPhi1}, we already know that \Pas for $n$ large enough $\left| \phi_{k,1}^{(n)} \left(z_{(k,n)}\right)\right| \geq \sqrt{1 - n^{-\eps_1/4}}$.
It follows that
\begin{align*}
  \left( \psi_k^{(n)}(z_{(k,n)}) \right)^2
  \;\geq\; 1 - n^{-\eps_1/4} + n^{2\delta - \eps_1/4} - 2n^{\delta - \eps_1/8} \;\geq\; 1 - 2n^{\delta - \eps_1/8}\, .
\end{align*}
The claim follows since we can choose $\delta$ arbitrarily small.

\section{Asymptotics of the eigenvalues}
\label{sec:EigVals}
The proof of Corollary \ref{cor:ConvInLaw} extends the proof of \cite[Corollary 1.11]{Flegel2016}, which uses the ideas of \cite{Watson1954}.
To keep the present paper self-contained, we repeat the initial definitions and statements.
We define
\begin{align*}
  a_n \;:=\; \left( n^{\frac{1}{2\gamma}} L^\ast (n)\right)^{-1}\;=\;\frac{1}{h (|B_n|)} = \sup\, \left\{ t\colon F_\pi (t) = |B_n|^{-1} \right\}
\end{align*}
with $h$ as in \eqref{equ:Defh} and $L^\ast (n)$ as in \eqref{def:Last}.
Then $|B_n| = \left( \Pr \left[ \pi_0 \leq a_n\right] \right)^{-1}$ and therefore
\begin{align}
  \lim_{n\to\infty} |B_n| \Pr \left[ \pi_0 \leq a_n \zeta \right] = \lim_{n\to\infty}\frac{F_\pi \left( a_n \zeta \right)}{F_\pi \left( a_n \right)} \;=\;\zeta^{2d\gamma} \quad\text{for all }\zeta\geq 0
  \label{equ:WatsonCond1}
\end{align}
since $a_n\to 0$ as $n\to\infty$ and $F_\pi$ varies regularly at zero with index $2d\gamma$.
We further note that if $\vec{e}_1\in \Z^d$ is a neighbor of the origin, then $\Pr \left[ \left\{ \pi_{0} \leq a_n \zeta\right\} \cap \left\{ \pi_{\vec{e}_1} \leq a_n \zeta\right\} \right] \leq F(a_n\zeta)^{4d-1}$ since for the event $\left\{ \pi_{0} \leq a_n \zeta\right\} \cap \left\{ \pi_{\vec{e}_1} \leq a_n \zeta\right\}$ at least $4d-1$ independent conductances $w$ have to be smaller than or equal to $a_n \zeta$.
Since $F$ varies regularly at zero with index $\gamma$, it follows that   
\begin{align}
  |B_n|\, \Pr \left[ \left\{ \pi_{0} \leq a_n \zeta\right\} \cap \left\{ \pi_{\vec{e}_1} \leq a_n \zeta\right\} \right]  \to 0\qquad\text{as }n\to \infty\, .
  \label{equ:WatsonCond2}
\end{align}

We start with the auxiliary Lemma \ref{lem:AuxConvInProb}, for which we need some further definitions.  
For a set $A\subset \Z^d$ we define $CC(A)$ as the set of connected components of $A$.
Furthermore, we define the outer site boundary of the set $A$ as
\begin{align}
  \del A := \left\{ z\in\Z^d\backslash A \colon \exists x\in A \text{ with } x\sim z\right\}\, .
\end{align}
For the natural numbers $q\leq m$ we further define the number
\begin{align}
 C_{m,q}^{(n)} (A)  := \bigl| \left\{ M\subset B_n\backslash (A\cap \del A) \colon |M|=m,|CC(M)|=q\right\}\bigr|\, .
 \label{equ:LeadOrderCmq}
\end{align}
\begin{remark}
\label{rem:LeadOrderCmq}
Note that if we fix a $k\in\N$, then as $n$ tends to infinity we have $C_{m,m}^{(n)} (A_n)= |B_n|^m / m! + O\left( |B_n|^{m-1} \right)$ for all sequences of subsets $A_n\in B_n$ with the constraint $|A_n|=k-1$.
Moreover, for $q\leq m-1$ there exists a constant $c_q<\infty$ such that for all $n\in\N$ and all sequences of subsets $A_n\subset B_n$ with $|A_n|=k-1$, we have $C_{m,q}^{(n)} (A_n) < c_q|B_n|^{q}$.
Note that this $c_q$ is independent of the specific choice of $A_n$.
\end{remark}
\begin{lem}
  \label{lem:AuxConvInProb}
  For any fixed $k,l\in\N$ the relations \eqref{equ:WatsonCond1} and \eqref{equ:WatsonCond2} imply that
  \begin{align}
    \lim_{n\to\infty} \sup_{A_n\subset B_n, \atop |A_n|=k-1} \sum_{m=1}^{l}\sum_{q=1}^{m-1}\sum_{M\subset B_n\backslash (A_n\cap \del A_n),\atop {|M|=m,\atop |CC(M)|=q}} \Pr \left[ \bigcap_{x\in M}\left\{ \pi_{x} \leq a_n\zeta\right\} \right] = 0 \text{ for all }\zeta\geq 0\, .
  \end{align}
\end{lem}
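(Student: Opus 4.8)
The plan is to bound the triple sum uniformly in $A_n$ by a quantity that tends to zero, using the combinatorial estimates collected in Remark \ref{rem:LeadOrderCmq} together with the decorrelation input \eqref{equ:WatsonCond2}. First I would fix $\zeta\geq 0$; the case $\zeta=0$ is trivial since then every event $\bigcap_{x\in M}\{\pi_x\le 0\}$ has probability zero, so assume $\zeta>0$. Since $k$ and $l$ are fixed, it suffices to treat each pair $(m,q)$ with $1\le q\le m-1$ and $1\le m\le l$ separately, and to show that
\begin{align*}
  \sup_{A_n\subset B_n,\,|A_n|=k-1}\;\sum_{M\subset B_n\backslash(A_n\cap\del A_n),\atop |M|=m,\,|CC(M)|=q}\Pr\left[\bigcap_{x\in M}\{\pi_x\le a_n\zeta\}\right]\;\xrightarrow[n\to\infty]{}0\, .
\end{align*}

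The key step is the probability estimate for a single admissible $M$. If $|M|=m$ but $|CC(M)|=q\le m-1$, then $M$ must contain at least one pair of neighbouring sites $x\sim y$; the remaining $m-2$ sites contribute events $\{\pi_z\le a_n\zeta\}$ that are either independent of the pair's event or positively correlated in a controlled way. More precisely, I would group the vertices of $M$ into its $q$ connected components, at least one of which has size $\ge 2$, and use independence of conductances on disjoint edge sets to factorise: picking one edge inside each non-singleton component, the event $\bigcap_{x\in M}\{\pi_x\le a_n\zeta\}$ forces a collection of distinct conductances to be $\le a_n\zeta$, and in particular it is contained in $\{\pi_{x_0}\le a_n\zeta\}\cap\{\pi_{y_0}\le a_n\zeta\}$ for one fixed neighbouring pair $x_0\sim y_0$, times independent single-site events $\{\pi_z\le a_n\zeta\}$ for $z$ ranging over a set of $m-q\ge 0$ further vertices (chosen one per "extra" vertex beyond a spanning forest), all on disjoint edges. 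Hence
\begin{align*}
  \Pr\left[\bigcap_{x\in M}\{\pi_x\le a_n\zeta\}\right]\;\le\;\Pr\left[\{\pi_{x_0}\le a_n\zeta\}\cap\{\pi_{y_0}\le a_n\zeta\}\right]\cdot F_\pi(a_n\zeta)^{\,m-q-1}\, ,
\end{align*}
with the convention that the last factor is absent when $m-q-1<0$, i.e.\ when $q=m-1$ we simply use the pair bound alone. Wait --- one must be slightly careful to get exactly $m-q-1$ genuinely independent single-site factors; the safe statement is that there are at least $q-1$ further singleton components plus $m-q$ surplus vertices, so at least $m-q-1$ independent single-site events beyond the neighbouring pair, which is all we need.

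Now I would sum over $M$. By \eqref{equ:LeadOrderCmq} and Remark \ref{rem:LeadOrderCmq} the number of admissible $M$ with $|M|=m$, $|CC(M)|=q$ is at most $c_q|B_n|^q$, with $c_q$ independent of $A_n$. Therefore the inner sum is bounded by
\begin{align*}
  c_q\,|B_n|^{q}\;\Pr\left[\{\pi_{0}\le a_n\zeta\}\cap\{\pi_{\vec e_1}\le a_n\zeta\}\right]\,F_\pi(a_n\zeta)^{\,m-q-1}\, ,
\end{align*}
where I used translation invariance to replace the fixed pair $x_0\sim y_0$ by $0\sim\vec e_1$. Writing this as
\begin{align*}
  c_q\Bigl(|B_n|\,\Pr\left[\{\pi_{0}\le a_n\zeta\}\cap\{\pi_{\vec e_1}\le a_n\zeta\}\right]\Bigr)\cdot\Bigl(|B_n|\,F_\pi(a_n\zeta)\Bigr)^{\,q-1}\cdot\Bigl(|B_n|\,F_\pi(a_n\zeta)\Bigr)^{\,m-q-1}\cdot|B_n|^{\,(q)-(q-1)-(m-q-1)-1+\cdots}
\end{align*}
is getting awkward; cleaner is to note that $q+(m-q-1)=m-1$ so the product of the $|B_n|$-powers attached to the $F_\pi$ and pair factors is exactly $|B_n|^{m-1+1}=|B_n|^{m}$? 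No: let me just count directly. We have one factor $|B_n|^{q}$ out front, one pair-probability, and $m-q-1$ copies of $F_\pi(a_n\zeta)$. Regroup as $\bigl(|B_n|\cdot\text{pair-prob}\bigr)\cdot\bigl(|B_n|\cdot F_\pi(a_n\zeta)\bigr)^{m-q-1}\cdot|B_n|^{\,q-1-(m-q-1)}$; the last exponent is $2q-m$, which may be positive. This shows the naive splitting does not immediately close unless $q-1\ge m-q-1$. The honest fix, and the main obstacle, is bookkeeping: one should instead bound $F_\pi(a_n\zeta)^{m-q-1}$ more generously is not enough --- rather, one must observe that the correct count of \emph{independent} single-site events is $m-q$ (number of surplus vertices) \emph{plus} $q-1$ (all-but-one singleton component among those forced to be singletons in a size-$(m,q)$ configuration with one genuine neighbouring pair), i.e.\ $m-1$ total independent single-site-or-pair constraints minus the double counting: concretely $\Pr[\bigcap_{x\in M}\{\pi_x\le a_n\zeta\}]\le\text{pair-prob}\cdot F_\pi(a_n\zeta)^{m-q-1}$ with the stronger count giving exponent large enough that, multiplied by $|B_n|^q$, one has at least $|B_n|\cdot\text{pair-prob}$ times $(|B_n|F_\pi(a_n\zeta))^{q-1}$, and $|B_n|F_\pi(a_n\zeta)\to\zeta^{2d\gamma}$ is bounded by \eqref{equ:WatsonCond1} (it is the $\zeta$-version of $|B_n|F_\pi(a_n)=1$), while $|B_n|\cdot\text{pair-prob}\to 0$ by \eqref{equ:WatsonCond2}. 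Hence the inner sum is at most a constant times $o(1)\cdot O(1)=o(1)$, uniformly in $A_n$ since $c_q$ does not depend on $A_n$. Summing the finitely many pairs $(m,q)$ with $m\le l$ preserves the limit zero, which is the claim.

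The step I expect to be the genuine obstacle is exactly this combinatorial accounting: matching the power of $|B_n|$ coming from the number of configurations $C^{(n)}_{m,q}(A_n)\le c_q|B_n|^q$ against the number of truly independent conductance constraints implied by $\{|M|=m,|CC(M)|=q\}$, so that every surplus factor of $|B_n|$ is absorbed by a factor $|B_n|F_\pi(a_n\zeta)=O(1)$ and at least one net factor $|B_n|\cdot\Pr[\{\pi_0\le a_n\zeta\}\cap\{\pi_{\vec e_1}\le a_n\zeta\}]=o(1)$ survives. Everything else --- reduction to fixed $\zeta$, reduction to a single pair $(m,q)$, translation invariance, and the final passage to the limit --- is routine given \eqref{equ:WatsonCond1}, \eqref{equ:WatsonCond2} and Remark \ref{rem:LeadOrderCmq}.
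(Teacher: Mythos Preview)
Your overall strategy matches the paper's: bound the probability for each admissible $M$ by a product involving one neighbouring-pair probability and some single-site factors, multiply by the combinatorial count $C^{(n)}_{m,q}(A_n)\le c_q|B_n|^q$ from Remark~\ref{rem:LeadOrderCmq}, and then invoke \eqref{equ:WatsonCond1}--\eqref{equ:WatsonCond2}. The gap is in the probability estimate itself, and it is exactly the bookkeeping you flag as the obstacle but do not resolve.

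Your claimed bound $\Pr\bigl[\bigcap_{x\in M}\{\pi_x\le a_n\zeta\}\bigr]\le \text{(pair-prob)}\cdot F_\pi(a_n\zeta)^{m-q-1}$ is not justified and, more importantly, is too weak to close. The ``surplus vertices beyond a spanning forest'' that you try to use for the extra $m-q-1$ factors lie in the \emph{same} connected component as the neighbouring pair, so their $\pi$-values share edges with the pair and are not independent of it; you cannot peel them off as separate factors. And even taking the inequality for granted, it fails to conclude: for $m=3$, $q=2$ your bound is just the pair probability, and $c_q|B_n|^{2}\cdot\Pr[\{\pi_0\le a_n\zeta\}\cap\{\pi_{\vec e_1}\le a_n\zeta\}]$ need not tend to zero (indeed, with $\text{pair-prob}\le F(a_n\zeta)^{4d-1}$ and $|B_n|\asymp a_n^{-2d\gamma}$ one gets a divergent quantity for $\gamma>0$). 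Your attempted rescue, ``the stronger count giving exponent large enough that \ldots one has at least $(|B_n|F_\pi(a_n\zeta))^{q-1}$'', is asserted but not derived; with exponent $m-q-1$ it simply does not follow when $m<2q$.

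The clean fix, which is what the paper does, is to factorize over the $q$ connected components rather than over individual vertices. Different components of $M$ are at graph distance $\ge 2$ in $\Z^d$ (otherwise they would merge), so the families $\{\pi_x\}_{x\in\mathscr{C}_j}$ for distinct components $\mathscr{C}_j$ depend on disjoint edge sets and are independent. One component has size $\ge 2$ and contributes at most $\Pr[\{\pi_0\le a_n\zeta\}\cap\{\pi_{\vec e_1}\le a_n\zeta\}]$; each of the remaining $q-1$ components contributes at most $\Pr[\pi_0\le a_n\zeta]=F_\pi(a_n\zeta)$ (just drop all but one site). This yields
\[
\Pr\Bigl[\bigcap_{x\in M}\{\pi_x\le a_n\zeta\}\Bigr]\le \Pr\bigl[\{\pi_0\le a_n\zeta\}\cap\{\pi_{\vec e_1}\le a_n\zeta\}\bigr]\cdot F_\pi(a_n\zeta)^{\,q-1},
\]
with exponent $q-1$ rather than $m-q-1$. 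Now the count $c_q|B_n|^q$ splits perfectly:
\[
c_q|B_n|^q\cdot\text{(pair-prob)}\cdot F_\pi(a_n\zeta)^{q-1}
= c_q\bigl(|B_n|\cdot\text{(pair-prob)}\bigr)\bigl(|B_n|F_\pi(a_n\zeta)\bigr)^{q-1}\xrightarrow[n\to\infty]{}0
\]
by \eqref{equ:WatsonCond2} and \eqref{equ:WatsonCond1}, uniformly in $A_n$. Summing over the finitely many $(m,q)$ finishes the proof.
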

\begin{proof}
We are summing over sets $M$ with the constraint $|CC(M)|=q<m=|M|$.
This means that here all the sets $M$ contain at least one connected component $\mathscr{C}$ with a neighboring pair of sites, i.e., $\Pr \left[ \bigcap_{x\in \mathscr{C}}\left\{ \pi_{x} \leq a_n\zeta\right\} \right] \leq \Pr \left[ \left\{ \pi_{0} \leq a_n\zeta\right\} \cap \left\{ \pi_{\vec{e}_1} \leq a_n\zeta\right\}\right]$.
Since $\pi_x$ and $\pi_y$ are independent if the sites $x$ and $y$ are in two different connected components of $M$, it follows that
\begin{align*}
  &\sum_{m=1}^{l}\sum_{q=1}^{m-1}\sum_{M\subset B_n\backslash (A_n\cap \del A_n),\atop {|M|=m,\atop |CC(M)|=q}} \Pr \left[ \bigcap_{x\in M}\left\{ \pi_{x} \leq a_n\zeta\right\} \right]\\
  &\mspace{100mu}\leq \sum_{m=1}^{l}\sum_{q=1}^{m-1} C_{m,q}^{(n)} (A_n) \Pr \left[ \pi_{0} \leq a_n\zeta\right]^{q-1}\Pr \left[ \left\{ \pi_{0} \leq a_n\zeta\right\} \cap \left\{ \pi_{\vec{e}_1} \leq a_n\zeta\right\}\right]\, .
\end{align*}
By Remark \ref{rem:LeadOrderCmq} there exists a constant $c_q<\infty$ such that $C_{m,q}^{(n)} (A_n)\leq c_q |B_n|^q$ for all sequences of subsets $A_n \subset B_n$ with the constraint that $|A_n| = k-1$.
Therefore the claim follows by \eqref{equ:WatsonCond1} and \eqref{equ:WatsonCond2}.
\end{proof}

\begin{proof}[Proof of Corollary \ref{cor:ConvInLaw}]
Because of the main theorem it remains to show that
\begin{align}
  \lim_{n\to\infty} \Pr \left[ \pi_{k,B_n} > \frac{\zeta}{n^{\frac{1}{2\gamma}}L^\ast(n)} \right] = \exp\left( -\zeta^{2d\gamma} \right)\sum_{j=0}^{k-1}\frac{\zeta^{2d\gamma j}}{j!}\quad\text{for all }\zeta\geq 0\, .
  \label{equ:CorRemainsPi}
\end{align}
The proof extends the proof of \cite[Corollary 1.11]{Flegel2016}, where we have already shown that
\begin{align}
  \lim_{n\to\infty} \Pr \left[ \min_{x\in B_n}\pi_x > a_n \zeta \right] = \exp\left( -\zeta^{2d\gamma} \right)\qquad \text{for all }\zeta\geq 0
  \label{equ:Cor0}
\end{align}
by extending the ideas of \cite{Watson1954} from $d=1$ to $d\geq 2$.
We will use \eqref{equ:Cor0} for the inductive base case $k=1$.

In what follows all the statements hold for all $\zeta\geq 0$.
For the inductive step we consider
\begin{align*}
  \Pr \left[ \pi_{k,B_n} > a_n \zeta \right]
    &= \Pr \left[ \pi_{k-1,B_n} > a_n \zeta \right] + \Pr \left[ \left\{ \pi_{k,B_n} > a_n \zeta \right\} \cap \left\{ \pi_{k-1,B_n} \leq a_n \zeta \right\}\right]\, .
\end{align*}
Let us now assume that the claim \eqref{equ:CorRemainsPi} holds for some $k-1$.
It follows that it remains to show that
\begin{align*}
  \lim_{n\to\infty} \Pr \left[ \pi_{k,B_n} > a_n \zeta,\, \pi_{k-1,B_n} \leq a_n \zeta \right] = \frac{\zeta^{2(k-1)d\gamma}}{(k-1)!} \exp\left( -\zeta^{2d\gamma} \right)\, .
\end{align*}
Let us start with the decomposition
\begin{align}
  \Pr \left[ \pi_{k,B_n} > a_n \zeta,\, \pi_{k-1,B_n} \leq a_n \zeta \right]
  &= \sum_{A\subset B_n,\atop |A|=k-1} \Pr \left[ \bigcap_{x\in A}\left\{ \pi_x \leq a_n \zeta \right\} \cap \mspace{-13mu}\bigcap_{y\in B_n\backslash A} \mspace{-13mu}\left\{ \pi_y > a_n \zeta \right\} \right]\nonumber\\
  &\mspace{-150mu}= \sum_{A\subset B_n,\atop |A|=k-1} \Pr \left[ \bigcap_{x\in A}\left\{ \pi_x \leq a_n \zeta \right\} \cap \mspace{-13mu}\bigcap_{y\in B_n\backslash (A\cap \del A)} \mspace{-13mu}\left\{ \pi_y > a_n \zeta \right\} \right]\nonumber\\
  &\mspace{-10mu}- \sum_{A\subset B_n,\atop |A|=k-1} \Pr \left[ \bigcap_{x\in A}\left\{ \pi_x \leq a_n \zeta \right\} \cap \left( \bigcup_{y\in \del A} \left\{ \pi_y \leq a_n \zeta \right\} \right) \right]\, .
  \label{equ:Cor1}
\end{align}
Let us argue that the second term on the above RHS converges to zero.
We observe that
\begin{align*}
  &\sum_{A\subset B_n,\atop |A|=k-1} \Pr \left[ \bigcap_{x\in A}\left\{ \pi_x \leq a_n \zeta \right\} \cap \left( \bigcup_{y\in \del A} \left\{ y \leq a_n \zeta \right\} \right) \right]\\
  &\mspace{80mu}\leq \sum_{A\subset B_n,\atop |A|=k-1} \sum_{y\in \del A} \Pr \left[\left\{ \pi_y \leq a_n \zeta \right\}  \cap \bigcap_{x\in A}\left\{ \pi_x \leq a_n \zeta \right\} \right]
  \leq \mspace{-10mu}\sum_{A\subset B_n,\atop {|A|=k, \atop |CC(A)|\leq k-1}}\mspace{-10mu} \Pr \left[\bigcap_{x\in A}\left\{ \pi_x \leq a_n \zeta \right\} \right]
\end{align*}
which converges to zero by virtue of Lemma \ref{lem:AuxConvInProb}.

Let us now consider the first term on the RHS of \eqref{equ:Cor1}.
Since for any $y\in B_n\backslash (A\cap \del A)$ the random variable $\pi_y$ is independent of $\left\{ \pi_x\right\}_{x\in A}$, the first sum on the RHS of \eqref{equ:Cor1} is% equal to
\begin{align}
  &\sum_{A\subset B_n,\atop |A|=k-1} \Pr \left[ \bigcap_{x\in A}\left\{ \pi_x \leq a_n \zeta \right\}\right] \Pr\left[ \min_{y\in B_n\backslash (A\cap \del A)} \pi_y > a_n \zeta \right]\nonumber\\
  &\mspace{250mu}\geq \Pr\left[ \min_{y\in B_n} \pi_y > a_n \zeta \right] \sum_{A\subset B_n,\atop |A|=k-1} \Pr \left[ \bigcap_{x\in A}\left\{ \pi_x \leq a_n \zeta \right\}\right]\label{equ:Cor3}\, .
\end{align}
Due to \eqref{equ:Cor0}, the first factor in the above RHS converges to $\exp\left( -\zeta^{2d\gamma} \right)$.
As a part of the proof for \eqref{equ:Cor0}, we have also shown that the second factor converges to $\zeta^{2(k-1)d\gamma}/(k-1)!$.
It thus remains to find an upper bound for the LHS of \eqref{equ:Cor3}.
Similar to the proof of \eqref{equ:Cor0}, we let $l$ be an even integer and estimate for all sequences of subsets $A_n\subset B_n$ with the constraint $|A_n|=k-1$ that
\begin{align*}
  \Pr\left[ \min_{y\in B_n\backslash (A_n\cap \del A_n)} \pi_y > a_n \zeta \right]
  &\leq 1 + \sum_{m=1}^{l}(-1)^{m}\sum_{M\subset B_n\backslash (A_n\cap \del A_n),\atop |M|=m} \Pr \left[ \bigcap_{x\in M}\left\{ \pi_{x} \leq a_n\zeta\right\} \right]\\
  &=1 + \sum_{m=1}^{l}(-1)^{m}\mspace{-5mu}\sum_{M\subset B_n\backslash (A_n\cap \del A_n),\atop {|M|=m,\atop CC(M)=m}}\mspace{-5mu} \Pr \left[ \bigcap_{x\in M}\left\{ \pi_{x} \leq a_n\zeta\right\} \right]\\
  &\mspace{100mu}+ \sum_{m=1}^{l}\sum_{q=1}^{m-1}\sum_{M\subset B_n\backslash (A_n\cap \del A_n),\atop {|M|=m,\atop CC(M)=q}} \Pr \left[ \bigcap_{x\in M}\left\{ \pi_{x} \leq a_n\zeta\right\} \right]
\end{align*}
According to Lemma \ref{lem:AuxConvInProb}, the supremum of the last sum on the above RHS taken over all sequences $A_n\subset B_n$ with $|A_n|=k-1$ converges to zero.
For the first sum we observe that since $|CC(M)|=|M|$, the set $M$ is sparse and therefore $\left\{ \pi_x \right\}_{x\in M}$ is a set of independent random variables.
It follows that
\begin{align*}
  \sum_{m=1}^{l}(-1)^{m}\mspace{-5mu}\sum_{M\subset B_n\backslash (A_n\cap \del A_n),\atop {|M|=m,\atop CC(M)=m}}\mspace{-5mu} \Pr \left[ \bigcap_{x\in M}\left\{ \pi_{x} \leq a_n\zeta\right\} \right]
  &= \sum_{m=1}^{l}(-1)^{m} C_{m,m}^{(n)}(A)\Pr \left[ \pi_{0} \leq a_n\zeta\right]^m\\
  &\mspace{-100mu}= \sum_{m=1}^{l}(-1)^{m} \left( |B_n|^m / m! + O\left( |B_n|^{m-1} \right) \right)\Pr \left[ \pi_{0} \leq a_n\zeta\right]^m
\end{align*}
by Remark \ref{rem:LeadOrderCmq}.
Taking the supremum over all sequences of subsets $A_n\subset B_n$ with the constraint $|A_n|=k-1$, this still converges to $\sum_{m=0}^l \zeta^{2d\gamma m}/m!$.
Since this holds for every $l\in 2\N$ and we already have the lower bound \eqref{equ:Cor3}, the claim follows.
\end{proof}

\appendix
\section{Appendix}
For better readability we have shifted a rather lengthy computation in the proof of Lemma \ref{lem:MuClose2} to this appendix.
We start by inserting the definition of the Laplacian, i.e.,
\begin{align*}
    &\sum_{x\in \mathscr{B}^{(n)}_{l+m}}\left( \Lw_{(l+m,n)} \phi_{l,m+1}^{(n)} (x) + \mu_{l,m+1}^{(n)} \phi_{l,m+1}^{(n)}(x) \right)^2\nonumber\\
    &\mspace{105mu}= \sum_{x\in \mathscr{B}^{(n)}_{l+m}}\left( \sum_{z\colon z\sim x} w_{xz} \left( \left( \phi_{l,m+1}^{(n)}\mathds{1}_{\mathscr{B}^{(n)}_{l+m}}\right) (z) - \phi_{l,m+1}^{(n)}(x)\right)+ \mu_{l,m+1}^{(n)} \phi_{l,m+1}^{(n)}(x) \right)^2
\end{align*}
Now we rearrange the terms in order to cancel $\mu_{l,m+1}^{(n)} \phi_{l,m+1}^{(n)}(x)$, i.e.,
\begin{align*}
    \text{LHS}&= \sum_{x\in \mathscr{B}^{(n)}_{l+m}}\left( \sum_{z\colon z\sim x} w_{xz} \left( \left(\phi_{l,m+1}^{(n)}\mathds{1}_{\Bln}\right)(z) - \phi_{l,m+1}^{(n)}(x)\right)\mspace{10mu}+\mspace{10mu} \mu_{l,m+1}^{(n)} \phi_{l,m+1}^{(n)}(x)\right.\\
    &\mspace{350mu}\left.-  \sum_{z\colon z\sim x} w_{xz} \left( \phi_{l,m+1}^{(n)}\mathds{1}_{\Bln\backslash \mathscr{B}^{(n)}_{l+m}}\right)(z)\right)^2\, ,
\end{align*}
where the first two terms cancel.
The last term simplifies to
\begin{align}
    \text{LHS}\mspace{10mu}&=\mspace{10mu}\sum_{x\in \mathscr{B}^{(n)}_{l+m}}\left( \sum_{z\in \Bln\backslash\mathscr{B}^{(n)}_{l+m}\colon z\sim x} \mspace{-20mu}w_{xz} \phi_{l,m+1}^{(n)}(z)\right)^2\nonumber\\
    %\mspace{-10mu}\leq\sum_{x\in B_n}\left( \sum_{z\in \Bln\backslash\mathscr{B}^{(n)}_{l+m}\colon z\sim x} \mspace{-20mu}w_{xz} \phi_{l,m+1}^{(n)}(z)\right)^2\nonumber\\
    &\leq\mspace{10mu} \max_{z\in \Bln\backslash\mathscr{B}^{(n)}_{l+m}} \left( \phi_{l,m+1}^{(n)}(z)\right)^2 \sum_{x\in B_n}\mspace{-3mu} \Biggl( \sum_{z\in \Bln\backslash\mathscr{B}^{(n)}_{l+m}\colon z\sim x} \mspace{-15mu}w_{xz} \Biggr)^2\, .
\label{equ:App1}
\end{align}

\paragraph{Acknowledgment.}
I am grateful to Wolfgang König for his very useful suggestions.

\bibliographystyle{alpha}

\end{document}